\pgfplotsset{compat=1.7}
\journal{arXiv}
\newcommand{\ol}{\overline}
\newcommand{\ul}{\underline}
\newcommand{\mc}{\mathcal}
\newcommand{\mcH}{\mc{H}}
\newcommand{\mcN}{\mc{N}}
\newcommand{\ee}{\textbf{e}}
\newcommand{\nn}{\textnormal{\textbf{n}}}
\newcommand{\dd}{\textnormal{d}}
\newtheorem{theorem}{Theorem}
\newtheorem{lemma}[theorem]{Lemma}
\newtheorem{definition}[theorem]{Definition}
\newtheorem{assumption}[theorem]{Assumption}
\newdefinition{remark}{Remark}
\newproof{proof}{Proof}
\begin{document}

\begin{frontmatter}

\title{Discretization error estimates for discontinuous Galerkin Isogeometric Analysis}


\author[ricam]{Stefan Takacs}
\ead{stefan.takacs@ricam.oeaw.ac.at}

\address[ricam]{Johann Radon Institute for Computational
and Applied Mathematics (RICAM),\\
Austrian Academy of Sciences}

\begin{abstract}
Isogeometric Analysis is a spline-based discretization method to partial differential equations which shows the approximation power of a high-order method. The number of degrees of freedom, however, is as small as the number of degrees of freedom of a low-order method. This does not come for free as the original formulation of Isogeometric Analysis requires a global geometry function. Since this is too restrictive for many kinds of applications, the domain is usually decomposed into patches, where each patch is parameterized with its own geometry function. In simpler cases, the patches can be combined in a conforming way. However, for non-matching discretizations or for varying coefficients, a non-conforming discretization is desired. An symmetric interior penalty discontinuous Galerkin (SIPG) method for Isogeometric Analysis has been previously introduced. In the present paper, we give error estimates that are explicit in the spline degree. This opens the door towards the construction and the analysis of fast linear solvers, particularly multigrid solvers for non-conforming multipatch Isogeometric Analysis.
\end{abstract}

\begin{keyword}
	Isogeometric Analysis \sep multi-patch domains \sep symmetric interior penalty
	discontinuous Galerkin
\end{keyword}

\end{frontmatter}

\section{Introduction}

The original design goal of Isogeometric Analysis (IgA),~\cite{Hughes:2005}, was
to unite the world of computer aided design (CAD) and the world of finite element
(FEM) simulation. In IgA, both the computational domain and the solution of the
partial differential equation (PDE) are represented by spline functions,
like tensor product B-splines or non-uniform rational B-splines (NURBS). This follows
the design goal since such spline functions are also used in standard CAD
systems to represent the geometric objects of interest.

The parameterization of the computational domain using just \emph{one} tensor-product
spline function, is possible only in simple cases. A necessary condition for this to
be possible, is that the computational domain is topologically equivalent to the unit
square or the unit cube. This might not be the case for more complicated computational domains.
Such domains are typically decomposed into subdomains, in IgA called patches, where
each of them is parameterized by its own geometry function. The standard approach
is to set up a conforming discretization. For a standard Poisson problem, this
means that the overall discretization needs to be continuous. For higher order problems,
like the biharmonic problem, even more regularity is required; conforming discretizations
in this case are rather hard to construct, cf.~\cite{Kapl:Sangalli:Takacs:2018} and references therein.

Even for the Poisson problem, a conforming discretization requires the
discretizations to agree on the interfaces.
This excludes many cases of practical interest, like having different grid sizes or
different spline degrees on the patches. Since such cases might be of interest, alternatives
to conforming discretizations are of interest. One promising alternative are discontinuous
Galerkin approaches, cf.~\cite{Riviere:2008,Arnold:Brezzi:Cockburn:Marini:2002}, particularly
the symmetric interior penalty discontinuous Galerkin (SIPG) method~\cite{Arnold:1982}. 
The idea of applying this technique to couple patches in IgA, has been previously discussed
in~\cite{LMMT:2015,LT:2015}.

Concerning the approximation error, in early IgA literature, only its
dependence on the grid size has been studied, cf.~\cite{Hughes:2005,Bazilevs:BeiraoDaVeiga:Cottrell:Hughes:Sangalli:2006}. In recent
publications~\cite{BeiraoDaVeiga:Buffa:Rivas:Sangalli:2011,Takacs:Takacs:2015,Floater:Sande:2017,Sande:Manni:Speleers:2018} also the
dependence on the spline degree has been investigated.
These error estimates are restricted to the single-patch case.
In~\cite{Takacs:2018}, the results from~\cite{Takacs:Takacs:2015} on approximation errors
for B-splines of maximum smoothness have been extended to the conforming multi-patch case.

For the case of discontinuous Galerkin discretizations, only error estimates in the grid
size are known, cf.~\cite{LT:2015}. The goal of the present paper is to present an analysis that is explicit both in the grid size $h$ and the spline degree $p$. We show that the penalty parameter 
has to grow like $p^2$ for the SIPG method to be well-posed.
If the solution is sufficiently smooth, the error in the energy norm decays like $h^{p}$. For the analysis of linear solvers, we need also estimates for less smooth functions. We give a bound that degrades only poly-logarithmically with $p$, cf.~\eqref{eq:starstar}.
These error estimates can be used to analyze multigrid solvers~\cite{Takacs:2019a} and FETI-like solvers~\cite{SchneckenleitnerTakacs:2020} for discontinuous Galerkin multipatch discretizations.

The remainder of the paper is organized as follows. In Section~\ref{sec:prelim}, we introduce
the model problem and give a detailed description of its discretization. A discussion of
the existence of a unique solution and the discretization and the approximation
error, is provided in Section~\ref{sec:discrerror}. The proof of the approximation error estimate
is given in Section~\ref{sec:approx:proof}. We provide numerical experiments that depict
our estimates, in Section~\ref{sec:num}. 

\section{The model problem and its discretization}\label{sec:prelim}
 
We consider the following \emph{Poisson model problem}.
Let $\Omega\subset\mathbb{R}^2$ be an open and simply
connected Lipschitz domain.
For any given source function $f\in L_2(\Omega)$,
we are interested in the function $u\in H^{1,\circ}(\Omega):=H^1(\Omega)\cap L_2^\circ(\Omega)$ solving
\begin{equation} \label{eq:model}
		(  \nabla u,\nabla v)_{L_2(\Omega)} = (f,v)_{L_2(\Omega)}
			\qquad \mbox{for all $v \in H^{1,\circ}(\Omega)$.}
\end{equation}
Here and in what follows, for any $r\in \mathbb{N}:=\{1,2,3,\ldots\}$,
$L_2(\Omega)$ and $H^r(\Omega)$
are the standard Lebesgue and Sobolev spaces with standard scalar products
$(\cdot,\cdot)_{L_2(\Omega)}$, $(\cdot,\cdot)_{H^r(\Omega)}:=(\nabla^r\cdot,\nabla^r\cdot)_{L_2(\Omega)}$, 
norms $\|\cdot\|_{L_2(\Omega)}$ and $\|\cdot\|_{H^r(\Omega)}$, and seminorms $|\cdot|_{H^r(\Omega)}$. 
The Lebesgue space of function with zero mean is given by $L_2^\circ(\Omega):=\{v\in L_2(\Omega)\;:\;(v,1)_{L_2(\Omega)}=0\}$.

The computational domain $\Omega$ is the union of $K$ non-overlapping open patches $\Omega_k$, i.e.,
\begin{equation} \label{eq:matching}
	\ol{\Omega} = \bigcup_{k=1}^K \ol{\Omega_k}
	\quad \mbox{and} \quad
	\Omega_k\cap\Omega_l=\emptyset \mbox{ for any }k\not=l
\end{equation}
holds, where $\ol{T}$ denotes the closure of $T$. 
Each patch $\Omega_k$ is represented by a bijective geometry function
\[
		G_k :\widehat{\Omega}:=(0,1)^2 \rightarrow \Omega_k := G_k (\widehat{\Omega})\subset \mathbb{R}^2,
\]
which can be continuously extended to the closure of $\widehat{\Omega}$ such that
$G_k(\ol{\widehat{\Omega}})=\ol{\Omega_k}$.
We use the notation
\[
		v_k := v|_{\Omega_k}
		\qquad \mbox{and} \qquad
		\widehat{v}_k := v_k \circ G_k
\]
for any function $v$ on $\Omega$. If $v\in H^1(\Omega)$, we can use standard trace theorems
to extend $v_k$ to $\ol{\Omega_k}$ and to extend $\widehat{v}_k$ to $\ol{\widehat{\Omega}}$.

We assume that the mesh induced by the interfaces between the patches does not have
any T-junctions, i.e., we assume as follows.
\begin{assumption}\label{ass:1}
	For any two patches $\Omega_k$ and $\Omega_l$ with $k\not=l$, 
	the intersection $\partial\Omega_k \cap \partial\Omega_l$ is either
	(a) empty, (b) a common vertex, or (c) a common edge $I_{k,l}=I_{l,k}$ such
	that
	\begin{equation}\label{eq:b:seq}
				\widehat{I}_{k,l}:=I_{k,l}\circ G_k \in \widehat{\mc{I}} := \{
							\{0\}\times(0,1), \{1\}\times(0,1), (0,1)\times\{0\}, (0,1)\times\{1\}
		\}.
	\end{equation}
\end{assumption}
Note that the pre-images $\widehat{I}_{k,l}$ and $\widehat{I}_{l,k}$ do not necessarily agree. 
We define
\begin{align*}
		\mcN(k)&:= \{ l \in \{1,\ldots,K\} \;:\; \Omega_k \mbox{ and } \Omega_l \mbox{ have
		common edge} \},
\end{align*}
$
		\mcN:= \{ (k,l) \; : \; k<l \mbox{ and } l\in \mcN(k) \} ,
$ and
$		\mcN^*:= \{ (k,l) \; : \; k>l \mbox{ and } l\in \mcN(k) \} ,
$

and the parameterization
$\gamma_{k,l}: (0,1)\rightarrow \widehat{I}_{k,l}$ via
\begin{equation}\label{eq:gamma:kl:def}
	\gamma_{k,l}(t) := \left\{
			\begin{array}{cc}
					(t,s) & \quad \mbox{if} \quad \widehat{I}_{k,l} = (0,1) \times \{s\}, \quad	
											s \in \{0,1\} \\
					(s,t) & \quad \mbox{if} \quad \widehat{I}_{k,l} = \{s\} \times (0,1), \quad	
											s \in \{0,1\}.
			\end{array}
		\right..
\end{equation}
We assume that the geometry functions agree on the interface (up to the
orientation); this does not require any smoothness of the overall geometry function
normal to the interface.

\begin{assumption}\label{ass:geoequiv:bdy}
		For all $(k,l) \in \mathcal{N}^*$ and $t\in (0,1)$, we have 
		\[
							\gamma_{k,l}(t) = G_k^{-1} \circ G_l \circ \gamma_{l,k}(t) 
							\quad \mbox{or} \quad
							\gamma_{k,l}(t) = G_k^{-1} \circ G_l \circ \gamma_{l,k}(1-t).
		\] 
\end{assumption}
\begin{remark}
	For any domain satisfying Assumption~\ref{ass:1}, we
	can reparameterize each patch such that this condition is satisfied. Assume
	to have two patches $\Omega_k$ and $\Omega_l$, sharing the patch
	$I_{k,l} = G_k((0,1)\times\{0\}) = G_l((0,1)\times\{0\})$. Using
	\[
		\widetilde{G}_k(x,y)
			:=
			G_k( y x + (1-y) \rho(x)   , y)
	\]
	where
	\[
		(\rho(t),0) := \left\{
			\begin{array}{ll}
				G_k^{-1}\circ G_l(t,0) & \quad\mbox{if } G_k(s,0)=G_l(s,0) \mbox{ for } s\in\{0,1\} \\
				G_k^{-1}\circ G_l(1-t,0) & \quad\mbox{if } G_k(s,0)=G_l(1-s,0) \mbox{ for } s\in\{0,1\} \\
			\end{array}
			\right.,
	\]
	we obtain a reparameterization of $G_k$, which (a) matches the parameterization
	of $\Omega_l$ at the interface, (b) is unchanged on the other interfaces, and (c)
	keeps the patch $\Omega_k$ unchanged. By iteratively applying this approach to all
	patches, we obtain a discretization satisfying Assumption~\ref{ass:geoequiv:bdy}.
\end{remark}

We assume that the geometry function is
sufficiently smooth such that the following assumption holds.
\begin{assumption}\label{ass:geoequiv}
		There is a constant $C_G>0$ such that the geometry functions $G_k$ satisfy
		the estimates
		\begin{equation}\label{eq:ass3}
			\sup_{x\in \widehat{\Omega}}
				\| \nabla^r G_k(x)\|_{\ell_2}  \le C_G
				\quad \mbox{and}\quad
				\sup_{x\in \widehat{\Omega}}
				\| (\nabla^r G_k(x))^{-1} \|_{\ell_2}  \le C_G
		\end{equation}
		for $r= 1,2$.
\end{assumption}

We assume full elliptic regularity.
\begin{assumption}\label{ass:reg}
	The solution $u$ of the model problem~\eqref{eq:model}
	satisfies
	\[
			u_k \in H^{2}(\Omega_k)
	\]
	for all $k=1,\ldots,K$.
\end{assumption}
For domains $\Omega$
with a sufficiently smooth boundary, cf.~\cite{Necas:1967}, and for convex
polygonal domains $\Omega$, cf.~\cite{Dauge:1988,Dauge:1992},
we have $u\in H^2(\Omega)$ and
thus also Assumption~\ref{ass:reg}. In case of varying diffusion
conditions (which are uniform on each patch), we might have
$u\not \in H^2(\Omega)$, but Assumption~\ref{ass:reg} might
still be satisfied cf.~\cite{Petzoldt:2001,Petzoldt:2002} and others.
The theory of this paper can be extended to cases where we only
know $u_k \in H^{3/2+\epsilon}(\Omega_k)$ for some $\epsilon>0$. For simplicity, we
restrict ourselves to the case of full elliptic regularity, i.e.,
Assumption~\ref{ass:reg}.

Having a representation of the domain, we introduce the isogeometric function space.
Following \cite{LMMT:2015,LT:2015}, we use a conforming isogeometric discretization for
each patch and couple the contributions for the patches using a symmetric interior
penalty discontinuous Galerkin (SIPG) method, cf.~\cite{Arnold:1982}, as follows.

For the \emph{univariate case}, the space of spline functions
of degree $p\in \mathbb{N}$
over a grid (vector
of breakpoints) $Z:=(\zeta_0,\zeta_1,\ldots,\zeta_{n-1},\zeta_n)$
with $\zeta=0$ and $\zeta_n=1$
and size $h:=\max_{i=0,\ldots,n-1} |\zeta_{i+1}-\zeta_i|$ is given by 
	\begin{equation*}
		S_{p,\Xi}(0,1) := \left\{ v \in C^{p-1}(0,1): \; v |_{(\zeta_i,\zeta_{i+1}]} \in \mathbb{P}^p \mbox{ for all } i=1,\ldots,n-1 \right\},
	\end{equation*}
where $\mathbb{P}^p$ is the space of polynomials of degree $p$.

On the \emph{parameter domain} $\widehat{\Omega}:=(0,1)^2$, we introduce
tensor-product B-spline functions
\begin{equation*}
		\widehat V_k := S_{p_k,Z_{k,1}}(0,1) \otimes S_{p_k,Z_{k,2}}(0,1).
\end{equation*}

The \emph{multi-patch function space} $V_h$  is given by
\begin{equation}\label{eq:igaspace}
	V_h :=
	\{
	u_h \in L_2^\circ(\Omega)
		\; : \;
		u_h \circ G_k \in \widehat V_k \mbox{ for } k=1,\ldots,K,
	\} .
\end{equation}
Note that the grid sizes $h_k$ and the spline degrees $p_k$ can be different for each of
the patches. We define
\[
		p:= \max_{k\in\{1,\ldots,K\}} p_k, 
		\qquad p_{\min} := \min_{k\in\{1,\ldots,K\}} p_k ,
		\qquad  h:=\max_{\delta\in\{1,2\}}\max_{k\in\{1,\ldots,K\}} h_{k,\delta}
\]
to be the largest spline degree, the smallest spline degree and the
grid size. We assume 
\begin{equation}\label{eq:p2}
		p_{\min} \ge 2 \quad \mbox{and} \quad
		\min_{\delta\in\{1,2\}}
		\min_{k\in\{1,\ldots,K\}} h_{k,\delta,\min} \ge C_h h
\end{equation}
for some constant $C_h>0$,
where $h_{k,\delta,\min}$ refers to the smallest knot span.

Following the assumption that $u_h$ is a patchwise function, we define for each $r\in\mathbb{N}$
a broken Sobolev space
\[
		\mcH^r(\Omega) := \{ v\in L_2(\Omega) \;:\; v_k \in H^r(\Omega)\},
\]
with associated norms and scalar products
\[		
		\|v\|_{\mcH^r(\Omega)} := (v,v)_{\mcH^r(\Omega)}^{1/2} \quad
		\mbox{and}\quad
		(u,v)_{\mcH^r(\Omega)}:=\sum_{k=1}^K( u, v)_{H^r(\Omega_k)}.
\]

For each patch, we define on its boundary $\partial\Omega_k$ the outer normal vector~$\nn_k$.
On each interface $I_{k,l}$, we define the jump operator $\llbracket \cdot \rrbracket$ by
\[
		\llbracket v \rrbracket := v_k - v_l \qquad\mbox{on } I_{k,l}=I_{l,k} \mbox{ where } (k,l) \in \mcN
\]
and the average operator $\{ \cdot \}$ by
\[
		\{ v \} := \tfrac12 (v_k + v_l) \qquad\mbox{on } I_{k,l}=I_{l,k} \mbox{ where } (k,l) \in \mcN.
\]

The discretization of the variational problem using the \emph{symmetric interior penalty discontinuous
Galerkin method} reads as follows. Find $u_h\in V_h$ such that
\begin{equation} \label{eq:model:discr}
		(u_h,v_h)_{A_h} = (f,v_h)_{L_2(\Omega)}
			\qquad \mbox{for all }v_h \in V_h,
\end{equation}
where
\begin{align*}
	(u,v)_{A_h} &:=	(u,v)_{\mcH^1(\Omega)} - (u,v)_{B_h} - (v,u)_{B_h} + (u,v)_{C_h},\\
	(u,v)_{B_h} &:=  \sum_{(k,l)\in \mcN} (\llbracket u \rrbracket,\{ \nabla v\}\cdot \nn_k)_{L_2(I_{k,l})} ,
	 \\
	(u,v)_{C_h} &:=\frac{\sigma}{h} \sum_{(k,l)\in \mcN}   (\llbracket u\rrbracket,\llbracket v\rrbracket)_{L_2(I_{k,l})}
\end{align*}
for all $u,v \in \mcH^{2,\circ}(\Omega)$, where the
penalty parameter 
\begin{equation}\label{eq:6a}
		\sigma = \sigma_0 p^2>0
\end{equation}
is chosen sufficiently large.

Using a basis for the space $V_h$, we obtain a standard matrix-vector problem:
Find $\ul{u}_h \in \mathbb{R}^N$ such that
\begin{equation} \label{eq:linear:system}
			A_h \ul{u}_h = \ul{f}_h.
\end{equation}
Here and in what follows,
$\ul{u}_h=[u_i]_{i=1}^N$ is the coefficient vector representing $u_h$ with respect
to the chosen basis, i.e., $u_h=\sum_{i=1}^N u_i \varphi_i$, and $\ul{f}_h =  [ (f, \varphi_i)_{L_2(\Omega)} ]_{i=1}^N$ is the coefficient
vector obtained by testing the right-hand-side functional with
the basis functions.

As the dependence on the geometry function is not in the focus of this paper,
unspecified constants might depend on $C_G$, $C_I$ and $C_h$.
Before we proceed, we introduce a convenient notation. 
\begin{definition}
	Any generic constant $c>0$ used within this paper is understood to be 
	independent of the grid size $h$, the spline degree $p$ and the number of patches $K$,
	but it might depend on the constants $C_G$, $C_I$ and $C_h$.

	We use the notation $a\lesssim b$ if there is a generic constant $c>0$ such that $a\le c b$ and
	the notation $a \eqsim b$ if $a\lesssim b$ and $b\lesssim a$.

	For symmetric positive definite matrices $A$ and $B$, we write
	\[
		A \le B \qquad \mbox{if} \qquad 
		\ul{v}_h^{\top} A \ul{v}_h \le\ul{v}_h^{\top} B \ul{v}_h \quad \mbox{ for all vectors } \ul{v}_h.
	\]
	The notations $A\lesssim B$ and $A \eqsim B$ are defined analogously.
\end{definition}

\section{A discretization error estimate}\label{sec:discrerror}

In \cite{LMMT:2015}, it has been shown that the bilinear form $(\cdot,\cdot)_{A_h}$ is coercive
and bounded in the dG-norm. For our further analysis, it is vital to know these conditions
to be satisfied with constants that are independent of the spline degree $p$. Thus, we define
the dG-norm via
\begin{align*}
	\|u\|_{Q_h}^2:= (u,u)_{Q_h}, \qquad \mbox{where}\qquad
		(u,v)_{Q_h} :=
		(u,v)_{\mcH^1(\Omega)}
		+ (u,v)_{C_h} 
\end{align*}
for all $u,v \in \mcH^{2,\circ}(\Omega)$.
Note that we define the norm differently to~\cite{LMMT:2015}, where the dG-norm was 
independent of $p$.

Before we proceed, we give some estimates on the geometry functions.
\begin{lemma}\label{lem:geoequiv:1}
	We have
	\begin{equation}\label{eq:lem:geoequiv:1}
		 \|v\circ G_k^{-1}\|_{H^r(\Omega_k)}  \eqsim \|v\|_{H^r(\widehat{\Omega})}
		\quad \mbox{ for all } \quad v \in H^r(\widehat{\Omega}), 
	\end{equation}
	where $r=0,1,2$. 
	For ease of notation, here and in what follows, we define $H^0:=L_2$.
	If~\eqref{eq:ass3} holds for $r=1,2,\ldots,s$
	with some $s>r$, then~\eqref{eq:lem:geoequiv:1} also holds
	for those choices of $r$.	Moreover, we have
	\[	
		   \|v\circ G_k^{-1}\|_{L_2(I_{k,l})}  \eqsim \|v\|_{L_2(\widehat{I}_{k,l})}
		\quad \mbox{ for all } \quad v \in H^1(\widehat{\Omega}).
	\]
\end{lemma}
\begin{proof}
	The statements follow directly from the chain rule for differentiation,
	the substitution rule for integration and Assumption~\ref{ass:geoequiv}.
\qed\end{proof}

\begin{lemma}\label{lem:geoequiv:2}
	We have
	$
		  \|(\nabla v\circ G_k^{-1})\cdot \nn_k \|_{L_2(I_{k,l})} \lesssim \|\nabla v\|_{L_2(\widehat{I}_{k,l})}
	$
	for all $v \in H^2(\widehat{\Omega})$.
\end{lemma}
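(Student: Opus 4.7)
The plan is to reduce the physical-domain boundary integral to a parameter-domain one by the same change-of-variables argument used in Lemma~\ref{lem:geoequiv:1}, and then bound the transformed integrand pointwise using only the fact that $\nn_k$ is a unit vector.

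First I would parameterize the interface $I_{k,l}$ by $G_k$ restricted to $\widehat I_{k,l}$; the arc-length Jacobian of this parameterization is bounded from above and below by a constant depending only on $C_G$, thanks to Assumption~\ref{ass:geoequiv}. Applying the substitution rule turns the integral
\[
		\int_{I_{k,l}} | (\nabla v)(G_k^{-1}(x)) \cdot \nn_k(x) |^2 \,\mathrm{d}s(x)
\]
into an integral over $\widehat I_{k,l}$ of $|(\nabla v)(\widehat x) \cdot \nn_k(G_k(\widehat x))|^2$ against a bounded weight.

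Next I would note that $|\nn_k|=1$ pointwise, so the Cauchy--Schwarz inequality in $\mathbb R^2$ yields
\[
		| (\nabla v)(\widehat x) \cdot \nn_k(G_k(\widehat x)) | \le \| \nabla v(\widehat x)\|_{\ell_2}
\]
for almost every $\widehat x \in \widehat I_{k,l}$. Integrating gives the claimed bound by $\|\nabla v\|_{L_2(\widehat I_{k,l})}$ up to a geometry-dependent constant. Of course, this requires the trace of $\nabla v$ on $\widehat I_{k,l}$ to exist in $L_2$, which is ensured by $v\in H^2(\widehat\Omega)$ (so $\nabla v\in H^1(\widehat\Omega)$) via the standard trace theorem.

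There is no real obstacle here; the only mildly nontrivial point is recognizing that the unit-length property of the physical-domain normal $\nn_k$ is exactly what allows a clean estimate by the full parameter-domain gradient, without any additional factors from $\nabla G_k$ appearing. The proof is essentially a one-step reduction to Lemma~\ref{lem:geoequiv:1} combined with Cauchy--Schwarz.
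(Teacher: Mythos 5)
Your proposal is correct and takes essentially the same route as the paper: the paper simply applies the pointwise bound $\|\nn_k\|_{L_\infty(I_{k,l})}=1$ first and then pulls the resulting integral back to $\widehat I_{k,l}$ via the substitution rule and Assumption~\ref{ass:geoequiv}, whereas you perform the same two steps in the opposite order. Your added remark that the trace of $\nabla v$ on $\widehat I_{k,l}$ lies in $L_2$ because $v\in H^2(\widehat\Omega)$ is a harmless elaboration of what the paper leaves implicit.
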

\begin{proof}
	We have 
	$
		\|(\nabla v\circ G_k^{-1})\cdot \nn_k\|_{L_2(I_{k,l})}
		\le
		\| \nabla v\circ G_k^{-1}  \|_{L_2(I_{k,l})} \|\nn_k\|_{L_\infty(I_{k,l})},
	$
	where certainly $\|\nn_k\|_{L_\infty(I_{k,l})}=1$ because the length of $\nn_k$ is always $1$.
	The estimate $\| \nabla v\circ G_k^{-1}  \|_{L_2(I_{k,l})}\lesssim 
	\|\nabla v\|_{L_2(\widehat{I}_{k,l})}$ follows directly from the chain
	rule for differentiation, the substitution rule for integration
	and Assumption~\ref{ass:geoequiv}.
\qed\end{proof}

For $\sigma$ sufficiently large, the symmetric
bilinear form $(\cdot,\cdot)_{A_h}$ is coercive and bounded, i.e.,
a scalar product.

\begin{theorem}[Coercivity and boundedness]\label{thrm:1}
	There is some $\sigma_0>0$ that only depends on $C_G$, $C_I$,
	and $C_h$ such that
	\[
		(u_h,u_h)_{A_h} \gtrsim \|u_h\|_{Q_h}^2\quad\mbox{and}\quad
		(u_h,v_h)_{A_h} \lesssim \|u_h\|_{Q_h} \| v_h\|_{Q_h}
	\]
	holds for all $u_h,v_h \in  V_h$ and all $\sigma\ge p^2 \sigma_0$.
\end{theorem}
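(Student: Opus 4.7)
The plan is to bound the symmetric off-diagonal term $(u,v)_{B_h}$ appropriately, so that the only part of the analysis that fails to be a direct Cauchy--Schwarz argument is a single weighted trace estimate whose constant carries the $p^2$ factor. Both coercivity and boundedness then follow from the same calculation via Young's inequality.

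The first step is to expand
\[
(u_h,u_h)_{A_h} = \|u_h\|_{\mcH^1_\alpha(\Omega)}^2 + \|u_h\|_{C_h}^2 - 2(u_h,u_h)_{B_h},
\]
and observe that it suffices to show
\[
|(u_h,v_h)_{B_h}|
\;\lesssim\;
\frac{p}{\sqrt{\sigma}}\,\|v_h\|_{\mcH^1_\alpha(\Omega)}\,\|u_h\|_{C_h}
\qquad \mbox{for all } u_h,v_h\in V_h. \tag{$\ast$}
\]
Indeed, $(\ast)$ with $v_h=u_h$ combined with Young's inequality gives, for every $\epsilon>0$,
$2|(u_h,u_h)_{B_h}|\le \epsilon\|u_h\|_{C_h}^2 + c\,\epsilon^{-1}p^2\sigma^{-1}\|u_h\|_{\mcH^1_\alpha(\Omega)}^2$. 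Choosing $\epsilon=1/2$ and then $\sigma_0$ so large that $c\,\epsilon^{-1}\sigma_0^{-1}\le 1/2$ yields coercivity $(u_h,u_h)_{A_h}\ge \tfrac12\|u_h\|_{Q_h}^2$ whenever $\sigma\ge\sigma_0 p^2$. Boundedness follows by applying $(\ast)$ to both $(u_h,v_h)_{B_h}$ and $(v_h,u_h)_{B_h}$ together with Cauchy--Schwarz on the remaining two pieces of $(\cdot,\cdot)_{A_h}$.

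To prove $(\ast)$, on each interface $I_{k,l}$ I would pair $\llbracket u_h\rrbracket$ with the penalty weight and $\{\alpha\nabla v_h\}\cdot\nn_k$ with its dual weight using Cauchy--Schwarz in $L_2(I_{k,l})$, then Cauchy--Schwarz in $\ell_2$ over interfaces:
\[
|(u_h,v_h)_{B_h}|
\le \|u_h\|_{C_h}\,
\Bigl(\sum_{(k,l)\in\mcN}\frac{h}{\sigma\,\alpha_{k,l}}\,\|\{\alpha\nabla v_h\}\cdot\nn_k\|_{L_2(I_{k,l})}^2\Bigr)^{1/2}.
\]
Expanding the average, using $\alpha_k,\alpha_l\le\alpha_{k,l}$ for coefficient robustness, and $h/h_k\lesssim 1$ (from $h\le C_h h_k$), the right factor reduces to
\[
\frac{c}{\sqrt{\sigma}}\Bigl(\sum_{k=1}^K \alpha_k\,h_k\sum_{l\in\mcN(k)} \|\nabla v_h|_{\Omega_k}\cdot\nn_k\|_{L_2(I_{k,l})}^2\Bigr)^{1/2}.
\]
At this point I would pull back to $\widehat{\Omega}$ via Lemma~\ref{lem:geoequiv:2} and invoke the inverse-trace inequality for tensor-product splines, $\|\nabla\widehat v_h\|_{L_2(\widehat I)}^2\lesssim p^2\,\|\nabla\widehat v_h\|_{L_2(\widehat{\Omega})}^2$ for $\widehat I\in\widehat{\mc{I}}$ and $\widehat v_h\in S_{p,h}(\widehat{\Omega})$ (this is where the constant $C_I$ enters and the $p^2$ appears). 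Rescaling back to $\Omega_k$ via Lemma~\ref{lem:geoequiv:1} produces the factor $p^2/h_k$, which combined with the $h_k$ above gives $p^2$. The remaining sum over $l\in\mcN(k)$ contributes only a bounded factor since each patch has at most four neighboring edges. This yields exactly $(\ast)$.

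The main obstacle I expect is making sure the weights $\alpha_{k,l}^{-1}$ and $\alpha_k$ combine so that the constant in $(\ast)$ is independent of the patchwise coefficients $\{\alpha_k\}$; this is what allows the final bound to be robust in $\alpha$. The spline trace inequality with the sharp $p^2$ factor is the only place where the spline degree enters, and it is exactly the reason one needs the assumption $\sigma\ge\sigma_0 p^2$. Everything else is a controlled combination of Cauchy--Schwarz, Young's inequality and the patchwise geometry equivalences of Lemmas~\ref{lem:geoequiv:1} and~\ref{lem:geoequiv:2}.
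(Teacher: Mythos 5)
Your overall strategy is the same as the paper's: split $(u_h,v_h)_{A_h}=(u_h,v_h)_{Q_h}-(u_h,v_h)_{B_h}-(v_h,u_h)_{B_h}$, bound $(\cdot,\cdot)_{B_h}$ by a weighted Cauchy--Schwarz argument in which the jump carries the penalty weight and the averaged flux the dual weight, use $\alpha_k\le\alpha_{k,l}$ and $|\mcN(k)|\le 4$ for robustness in the coefficients, and absorb the mixed term via Young's inequality once $\sigma\ge\sigma_0 p^2$; your reduction to the single estimate $(\ast)$ and the deduction of both coercivity and boundedness from it are correct and correspond to \eqref{eq:b:bound}--\eqref{eq:c0:def} in the paper.

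The one genuine flaw is in the justification of the trace estimate that produces the factor $p^2$. The inequality you invoke on the parameter domain, $\|\nabla\widehat v_h\|_{L_2(\widehat I)}^2\lesssim p^2\,\|\nabla\widehat v_h\|_{L_2(\widehat\Omega)}^2$, is false uniformly in the mesh size: testing with a spline supported in the strip of elements of width $h_k$ along $\widehat I$ shows that the sharp discrete trace inequality must already carry the factor $p^2/h_k$ on $\widehat\Omega$. You then claim that the missing $1/h_k$ is ``produced'' by mapping back to $\Omega_k$ via Lemma~\ref{lem:geoequiv:1}, but the constants there depend only on $C_G$ and are independent of $h_k$ --- the geometry function maps the whole unit square to the whole patch, so no mesh-size factor can arise from the pullback. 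The net factor $p^2/h_k$ you use in the final bookkeeping is nevertheless the right one, so the argument is repairable: either quote the spline trace inequality in its correct scaled form $\|\nabla\widehat v_h\|_{L_2(\widehat I_{k,l})}^2\lesssim \tfrac{p^2}{h_k}\|\widehat v_h\|_{H^1(\widehat\Omega)}^2$, or proceed as the paper does in \eqref{eq:theinverse}, combining the multiplicative trace estimate $\|\nabla \widehat v_h\cdot\nn_k\|_{L_2(\widehat I_{k,l})}^2\lesssim \|\widehat v_h\|_{H^2(\widehat\Omega)}\|\widehat v_h\|_{H^1(\widehat\Omega)}$ from \cite[Lemma~4.4]{Takacs:2018} with the inverse estimate $\|\widehat v_h\|_{H^2(\widehat\Omega)}\lesssim \tfrac{p^2}{h}\|\widehat v_h\|_{H^1(\widehat\Omega)}$ of \cite[Corollary~3.94]{Schwab:1998} and the Poincar\'e inequality to pass to the seminorm. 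With that correction, your proof coincides with the one in the paper.
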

\begin{proof}
	Note that
	$
		(u_h,v_h)_{A_h} = (u_h,v_h)_{Q_h} 
					- (u_h,v_h)_{B_h} - (v_h,u_h)_{B_h}.
	$
	Using Lemma~\ref{lem:geoequiv:2}, \cite[Lemma~4.4]{Takacs:2018},
	\cite[Corollary~3.94]{Schwab:1998} and Lemma~\ref{lem:geoequiv:1}, we obtain
	\begin{align}
		&\|\nabla v_h\cdot \nn_k\|_{L_2(I_{k,l})}^2 \lesssim
		\|\nabla (v_h\circ G_k)\cdot \nn_k\|_{L_2(\widehat{I}_{k,l})}^2 \lesssim
		\|v_h\circ G_k\|_{H^{2}(\widehat{\Omega})}\|v_h\circ G_k\|_{H^{1}(\widehat{\Omega})} \nonumber\\&\lesssim
		\frac{p^2}h\|v_h\circ G_k\|_{H^{1}(\widehat{\Omega})}^2 \lesssim
		\frac{p^2}h\|v_h\|_{H^{1}(\Omega_k)}^2\label{eq:theinverse}
	\end{align}
	for all $v_h \in V_h$, $k=1,\ldots,K$ and $l \in \mcN(k)$.
	As $V_h \subset L_2^\circ(\Omega)$, the Poincar\'e inequality
	(see, e.g.,~\cite[Theorem~A.25]{Schwab:1998}) yields also
	\[
			\|\nabla v_h\cdot \nn_k\|_{L_2(I_{k,l})}^2 \lesssim\frac{p^2}h|v_h|_{H^{1}(\Omega_k)}^2.
	\]
	The Cauchy-Schwarz inequality, the triangle
	inequality, \eqref{eq:theinverse}, 
	and $|\mcN(k)|\le 4$ yield
	\begin{equation}\label{eq:b:bound}
	\begin{aligned}
		&|(u_h,v_h)_{B_h}| \le 
			\left(\sum_{(k,l)\in \mcN} 
					\|\llbracket u_h \rrbracket\|_{L_2(I_{k,l})}^2 
			\right)^{1/2}\left(\sum_{(k,l)\in \mcN} 
				\|\{\nabla v_h\}\cdot \nn_k\|_{L_2(I_{k,l})}^2
			\right)^{1/2}
			 \\
			&\quad\lesssim 
			\left(\sum_{(k,l)\in \mcN} 
					\|\llbracket u_h \rrbracket\|_{L_2(I_{k,l})}^2 
			\right)^{1/2}\left(\sum_{k=1}^K \sum_{j\in\mcN(k)}
				\|\nabla v_h\cdot \nn_k\|_{L_2(I_{k,l})}^2
			\right)^{1/2} 
			\\
			&\quad\lesssim \left(\frac{p^2}{h}\right)^{1/2}
			\left(\sum_{(k,l)\in \mcN} 
					\|\llbracket u_h \rrbracket\|_{L_2(I_{k,l})}^2 
			\right)^{1/2}
			\left(\sum_{k=1}^K 
				|v_h|_{H^{1}(\Omega_k)}^2
			\right)^{1/2}
				 \\
			&\quad\le p\,\sigma^{-1/2}
			 \| u_h \|_{Q_h}
			\| v_h \|_{Q_h}
	\end{aligned}
	\end{equation}
	for all $u_h,v_h\in V_h$. Let $c_0\eqsim 1$ be the hidden constant, i.e., such that
	\begin{equation}\label{eq:c0:def}
			|(u_h,v_h)_{B_h}| \le c_0 \,p\, \sigma^{-1/2} \| u_h \|_{Q_h}
			\| v_h \|_{Q_h}.
	\end{equation}
	For $\sigma \ge 16 \,c_0\,p^2$, we obtain
		\[
		(u_h,u_h)_{A_h} = \|u_h\|_{Q_h}^2 - 2 (u_h,u_h)_{B_h}\ge 
			 \frac12 \|u_h\|_{Q_h}^2,
	\]
	i.e., coercivity. Using~\eqref{eq:b:bound} and the Cauchy-Schwarz inequality,
	we obtain further
	\[
		(u_h,v_h)_{A_h} = (u_h,v_h)_{Q_h} -  (u_h,v_h)_{B_h} -  (v_h,u_h)_{B_h}\le 
			 \frac32 \|u_h\|_{Q_h}\|v_h\|_{Q_h},
	\]
	i.e.,
	boundedness.
\qed\end{proof}

As we have boundedness and coercivity (Theorem~\ref{thrm:1}), the Lax Milgram
theorem (see, e.g.,~\cite[Theorem~1.24]{Schwab:1998}) yields
states existence and uniqueness of a solution, i.e., the following statement.
\begin{theorem}[Existence and uniqueness]
	If $\sigma$ is chosen as in Theorem~\ref{thrm:1}, the problem~\eqref{eq:model:discr}
	has exactly one solution $u_h\in V_h$.
\end{theorem}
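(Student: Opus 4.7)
The plan is to apply the Lax--Milgram theorem on the finite-dimensional Hilbert space $(V_h,(\cdot,\cdot)_{Q_h})$. Under the hypothesis $\sigma\ge\sigma_0 p^2$, Theorem~\ref{thrm:1} already delivers the two essential ingredients: coercivity and boundedness of the symmetric bilinear form $(\cdot,\cdot)_{A_h}$ with respect to $\|\cdot\|_{Q_h}$. What remains is to check that $\|\cdot\|_{Q_h}$ really is a norm on $V_h$ (and not just a seminorm), and that $v_h \mapsto (f,v_h)_{L_2(\Omega)}$ is a continuous linear functional on $(V_h,\|\cdot\|_{Q_h})$.

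For the norm property I would argue that $\|v_h\|_{Q_h}=0$ forces $|v_h|_{H^1(\Omega_k)}=0$ on every patch, so $v_h$ is patchwise constant; simultaneously $\llbracket v_h\rrbracket=0$ on every interface (from the $C_h$ contribution), so $v_h$ is in fact a single global constant on the connected domain $\Omega$. The zero-mean constraint built into the definition of $V_h$ in~\eqref{eq:igaspace} then forces $v_h=0$. Continuity of the right-hand side functional is automatic on the finite-dimensional space $V_h$ and can be made quantitative, if desired, by combining Cauchy--Schwarz in $L_2(\Omega)$ with a broken Poincar\'e inequality for $L_2^\circ(\Omega)$ to bound $\|v_h\|_{L_2(\Omega)}$ by $\|v_h\|_{Q_h}$.

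An alternative route, which I would probably prefer in the final write-up, simply notes that on the finite-dimensional space $V_h$ the bilinear form $(\cdot,\cdot)_{A_h}$ corresponds to the matrix $A_h$ of~\eqref{eq:linear:system}; Theorem~\ref{thrm:1} gives $\ul{v}_h^{\top} A_h \ul{v}_h \gtrsim \|v_h\|_{Q_h}^2$, and by the argument above this is strictly positive for every nonzero coefficient vector. Hence $A_h$ is symmetric positive definite and therefore invertible, so~\eqref{eq:linear:system}, and with it~\eqref{eq:model:discr}, has exactly one solution.

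There is no serious technical obstacle: the statement is a direct corollary of Theorem~\ref{thrm:1} together with the single observation that the zero-mean side condition in~\eqref{eq:igaspace} promotes $\|\cdot\|_{Q_h}$ from a seminorm to a genuine norm on $V_h$.
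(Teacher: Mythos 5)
Your proposal is correct and follows essentially the same route as the paper, which simply invokes the Lax--Milgram theorem together with the coercivity and boundedness of $(\cdot,\cdot)_{A_h}$ from Theorem~\ref{thrm:1}. The only difference is that you additionally spell out why $\|\cdot\|_{Q_h}$ is a genuine norm on $V_h$ (patchwise constancy plus vanishing jumps plus the zero-mean constraint in~\eqref{eq:igaspace}) and why the right-hand side is continuous, details the paper leaves implicit.
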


The following theorem shows that
the solution of the original problem also satisfies the discretized bilinear form.
\begin{theorem}[Consistency]\label{thrm:consit}
	The solution $u\in H^{1,\circ}(\Omega)\cap \mc H^2(\Omega)$
	of the original problem~\eqref{eq:model}
	satisfies
	\[
			(u,v_h)_{A_h} = (f,v_h)_{L_2(\Omega)} \quad \mbox{for all } v_h \in V_h.
	\]
\end{theorem}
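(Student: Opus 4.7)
The plan is to show that the nonconforming terms involving $u$ drop out (because $u \in H^{1,\circ}(\Omega) \subset H^1(\Omega)$ has no jumps across interfaces), integrate by parts patchwise, and identify the remaining interface contributions with the symmetric penalty term $(v_h,u)_{B_h}$ by using the continuity of the normal flux $\alpha\nabla u\cdot\nn$.

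First I would observe that $u \in H^{1,\circ}(\Omega)$ admits a (single-valued) trace on each interface $I_{k,l}$, so $\llbracket u \rrbracket = 0$ in $L_2(I_{k,l})$ for every $(k,l)\in\mcN$. This immediately gives $(u,v_h)_{B_h}=0$ and $(u,v_h)_{C_h}=0$, so that
\[
(u,v_h)_{A_h} \;=\; (u,v_h)_{\mcH^1_\alpha(\Omega)} \;-\; (v_h,u)_{B_h}.
\]
Next, since $u_k \in H^2(\Omega_k)$ by Assumption~\ref{ass:reg}, integration by parts patchwise yields
\[
(u,v_h)_{\mcH^1_\alpha(\Omega)} \;=\; -\sum_{k=1}^K \alpha_k (\Delta u_k,v_h)_{L_2(\Omega_k)} + \sum_{k=1}^K \alpha_k (\nabla u_k\cdot\nn_k, v_h)_{L_2(\partial\Omega_k)}.
\]
Because $\alpha$ is constant on each patch, the volume contribution equals $-(\nabla\cdot(\alpha\nabla u),v_h)_{L_2(\Omega)}$; testing~\eqref{eq:model} with smooth functions compactly supported in each patch gives $-\nabla\cdot(\alpha\nabla u) = f$ in $L_2(\Omega_k)$, so the volume part contributes exactly $(f,v_h)_{L_2(\Omega)}$.

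For the boundary terms I would split $\partial\Omega_k$ into the portion on $\partial\Omega$ and the interfaces $I_{k,l}$ with $l\in\mcN(k)$. The part on $\partial\Omega$ vanishes by testing~\eqref{eq:model} with $v \in H^{1,\circ}(\Omega)$ whose trace is nonzero on $\partial\Omega$ (the natural Neumann condition $\alpha\nabla u\cdot\nn=0$ on $\partial\Omega$ follows from the patchwise $H^2$ regularity together with the weak equation). On each interface $I_{k,l}$, the same argument applied to test functions supported in $\Omega_k\cup I_{k,l}\cup\Omega_l$ yields the flux continuity
\[
\alpha_k \nabla u_k\cdot \nn_k \;=\; \alpha_l\nabla u_l\cdot\nn_k \quad\mbox{on } I_{k,l},
\]
since $\nn_l=-\nn_k$ there. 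Hence on $I_{k,l}$ the common value equals $\{\alpha\nabla u\}\cdot\nn_k$. Grouping the two contributions from patches $k$ and $l$ at $I_{k,l}$ gives
\[
\alpha_k(\nabla u_k\cdot\nn_k)v_{h,k} + \alpha_l(\nabla u_l\cdot\nn_l)v_{h,l}
= \{\alpha\nabla u\}\cdot\nn_k\,(v_{h,k}-v_{h,l})
= \{\alpha\nabla u\}\cdot\nn_k\,\llbracket v_h\rrbracket,
\]
so the total interface contribution is exactly $(v_h,u)_{B_h}$.

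Putting everything together,
\[
(u,v_h)_{\mcH^1_\alpha(\Omega)} \;=\; (f,v_h)_{L_2(\Omega)} \;+\; (v_h,u)_{B_h},
\]
and subtracting $(v_h,u)_{B_h}$ yields the claim. The only delicate step is justifying the flux continuity and the vanishing of the outer boundary term; both rely on $u_k\in H^2(\Omega_k)$ and on testing the weak formulation~\eqref{eq:model} with functions whose support crosses the interface or touches $\partial\Omega$, which is admissible because $H^{1,\circ}(\Omega)$ does not impose any essential boundary condition.
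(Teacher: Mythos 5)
Your route is the standard SIPG consistency argument (jump of $u$ vanishes, patchwise integration by parts, flux continuity across the interfaces), which is exactly what the paper relies on: it gives no proof of its own and simply cites \cite{Riviere:2008}, whose Proposition~2.9 is this argument. So in substance you reconstruct the intended proof, and the structural steps ($\llbracket u\rrbracket=0$, hence $(u,v_h)_{B_h}=(u,v_h)_{C_h}=0$; identification of the interface terms with $(v_h,u)_{B_h}$ via $\alpha_k\nabla u_k\cdot\nn_k=\alpha_l\nabla u_l\cdot\nn_k$) are correct.

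One step, however, is inaccurate as written. You recover the strong form by ``testing \eqref{eq:model} with smooth functions compactly supported in each patch,'' but such functions are in general not admissible: the test space in \eqref{eq:model} is $H^{1,\circ}(\Omega)=H^1(\Omega)\cap L_2^\circ(\Omega)$, so only zero-mean functions may be inserted. Subtracting the mean from a compactly supported $\varphi$ gives an admissible test function with the same gradient, and what one actually obtains is $-\alpha_k\Delta u_k=f-\bar f$ on $\Omega_k$ with $\bar f:=(f,1)_{L_2(\Omega)}/(1,1)_{L_2(\Omega)}$, not $-\alpha_k\Delta u_k=f$ (these coincide only under the compatibility condition $(f,1)_{L_2(\Omega)}=0$, which the paper does not assume); the natural condition $\alpha\nabla u\cdot\nn=0$ on $\partial\Omega$ and the interface flux continuity are then obtained by testing modulo constants, which is harmless since constants do not see gradients. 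This does not endanger the theorem: by \eqref{eq:igaspace} we have $V_h\subset L_2^\circ(\Omega)$, so the volume contribution equals $(f-\bar f,v_h)_{L_2(\Omega)}=(f,v_h)_{L_2(\Omega)}$ and your final identity $(u,v_h)_{A_h}=(f,v_h)_{L_2(\Omega)}$ survives verbatim. You should either insert this mean-correction explicitly or state the compatibility condition; as it stands, the sentence deriving $-\nabla\cdot(\alpha\nabla u)=f$ in $L_2(\Omega_k)$ is not justified by the weak formulation you are allowed to test.
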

For a proof, see, e.g.,~\cite[Proposition~2.9]{Riviere:2008}; the proof requires
elliptic regularity (cf. Assumption~\ref{ass:reg}).

If boundedness of the bilinear form  $(\cdot,\cdot)_{A_h}$ was also
satisfied for $u\in  \mcH^{2,\circ}(\Omega)$,
Ce\'a's Lemma (see, e.g.,~\cite[Theorem~2.19.iii]{Schwab:1998})
would allow to bound the discretization error. However, the
bilinear form is not bounded in the norm
$\|\cdot\|_{Q_h}$, but only in the stronger norm $\|\cdot\|_{Q_h^+}$,
given by
	\begin{equation}\label{eq:def:qhplus}
		\|u\|_{Q_h^+}^2:=
		\|u\|_{Q_h}^2+
				\frac{h^2}{\sigma^2} |u|_{\mc H^2(\Omega)}^2.
	\end{equation}
\begin{theorem}\label{thrm:1a}
	If $\sigma$ is chosen as in Theorem~\ref{thrm:1},
	\[
		(u,v_h)_{A_h} \lesssim \|u\|_{Q_h^+} \|v_h\|_{Q_h}
	\]
	holds for all $u \in \mcH^{2,\circ}(\Omega)$ and all
	$v_h \in V_h$.
\end{theorem}
\begin{proof}
	Let $u\in \mcH^{2,\circ}(\Omega)$ and $v_h\in V_h$ be arbitrarily but fixed.
	Note that the arguments from~\eqref{eq:b:bound} also hold if the first
	parameter of the bilinear form $(\cdot,\cdot)_{B_h}$ is not in $V_h$. So, we obtain
	\[
			|(u,v_h)_{B_h}| \le  \frac{1}{4} \|u\|_{Q_h} \|v_h\|_{Q_h}.
	\]
	Using Lemma~\ref{lem:geoequiv:2}, \cite[Lemma~4.4]{Takacs:2018},
	Lemma~\ref{lem:geoequiv:1} and the Poincar\'e inequality, we obtain
	\begin{align}
		&\|\nabla v\cdot \nn_k\|_{L_2(I_{k,l})}^2 \lesssim
		\|\nabla (v\circ G_k)\cdot \nn_k\|_{L_2(\widehat{I}_{k,l})}^2 \lesssim
		\|v\circ G_k\|_{H^{2}(\widehat{\Omega})}\|v\circ G_k\|_{H^{1}(\widehat{\Omega})} \nonumber\\&\lesssim
		\|v\|_{H^{2}(\Omega_k)}\|v\|_{H^{1}(\Omega_k)}
		\le
		\frac{1}{\beta} \|v\|_{H^{2}(\Omega_k)}^2+\beta \|v\|_{H^{1}(\Omega_k)}^2
		\le
		\frac{1}{\beta} |v|_{H^{2}(\Omega_k)}^2+\beta |v|_{H^{1}(\Omega_k)}^2
		\label{eq:theinverse2}
	\end{align}
	for all $v \in H^2(\Omega_k)$, all $k=1,\ldots,K$, all $l \in \mcN(k)$ and all $\beta>1$.
	Using this estimate,
	and $|\mcN(k)|\le 4$, we obtain for $\beta:=h^{-2}\sigma$
	\begin{align*}
		&|(v_h,u)_{B_h}| 
			 \le 
			\left(\frac{\sigma}{h} \sum_{(k,l)\in \mcN}
					\|\llbracket v_h \rrbracket\|_{L_2(I_{k,l})}^2 
			\right)^{1/2}
			\left(\frac{h}{\sigma}
				\sum_{k=1}^K \sum_{l\in \mcN(k)} 
				\|\nabla u\cdot \nn_k\|_{L_2(I_{k,l})}^2
			\right)^{1/2} 
			\\
			&\quad \le 
			\|v_h\|_{Q_h}
			\left(\sum_{k=1}^K 
				 |u|_{H^1(\Omega_k)}^2
				+ \frac{h^2}{\sigma^2} \sum_{k=1}^K  |u|_{H^2(\Omega_k)}^2
			\right)^{1/2} 
			 \lesssim  \|v_h\|_{Q_h} \|u\|_{Q_h^+}.
	\end{align*}
	Using these estimates, we obtain
	\begin{align*}
			(u,v_h)_{A_h} &= (u,v_h)_{Q_h}-(u,v_h)_{B_h}-(v_h,u)_{B_h} 
				\lesssim \|u\|_{Q_h^+} \|v_h\|_{Q_h},
	\end{align*}
	which finishes the proof.
\qed\end{proof}

Using consistency (Theorem~\ref{thrm:consit}), coercivity and boundedness
(Theorems~\ref{thrm:1} and~\ref{thrm:1a}), we 
can bound the discretization error using a the approximation error.
\begin{theorem}[Discretization error estimate]\label{thrm:cea}
	Provided the assumptions of
	Theorems~\ref{thrm:1} and~\ref{thrm:consit}, the
	estimate
	\[
		\|u-u_h\|_{Q_h} \lesssim \inf_{v_h \in V_h} \|u-v_h\|_{Q_h^+}
	\]
	holds,
	where $u$ is the solution of the original problem~\eqref{eq:model}
	and $u_h$ is the
	solution of the discrete problem~\eqref{eq:model:discr}.
\end{theorem}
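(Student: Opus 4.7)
The plan is to run a standard second Strang / Céa-style argument combining Galerkin orthogonality (which follows from consistency), coercivity on $V_h$, and the enhanced boundedness of $(\cdot,\cdot)_{A_h}$ in the $\|\cdot\|_{Q_h^+} \times \|\cdot\|_{Q_h}$ norm. Fix an arbitrary $v_h \in V_h$. Since $V_h \subset L_2^\circ(\Omega)$ and $u \in H^{1,\circ}(\Omega)\cap\mcH^2(\Omega)$, the difference $v_h - u$ lies in $\mcH^{2,\circ}(\Omega)$, so that all of Theorems~\ref{thrm:1},~\ref{thrm:consit} and~\ref{thrm:1a} are applicable to the objects at hand.

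First I would split the error by the triangle inequality as $\|u-u_h\|_{Q_h} \le \|u-v_h\|_{Q_h} + \|v_h - u_h\|_{Q_h}$. The first summand is controlled by $\|u-v_h\|_{Q_h^+}$ directly from the definition~\eqref{eq:def:qhplus}, which gives $\|\cdot\|_{Q_h}\le\|\cdot\|_{Q_h^+}$. For the second summand I exploit that $v_h - u_h \in V_h$ and apply the coercivity statement of Theorem~\ref{thrm:1}:
\[
	\|v_h - u_h\|_{Q_h}^2 \lesssim (v_h-u_h, v_h-u_h)_{A_h}.
\]

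Next I would establish Galerkin orthogonality: subtracting~\eqref{eq:model:discr} from the consistency identity in Theorem~\ref{thrm:consit} gives $(u-u_h, w_h)_{A_h}=0$ for all $w_h \in V_h$. In particular, choosing $w_h = v_h - u_h$ allows me to rewrite
\[
	(v_h - u_h, v_h - u_h)_{A_h} = (v_h - u, v_h - u_h)_{A_h}.
\]
Now the enhanced boundedness from Theorem~\ref{thrm:1a}, applied with first argument $v_h - u \in \mcH^{2,\circ}(\Omega)$ and second argument $v_h - u_h \in V_h$, yields
\[
	(v_h-u, v_h-u_h)_{A_h} \lesssim \|v_h - u\|_{Q_h^+} \, \|v_h - u_h\|_{Q_h}.
\]
Combining the last three displays and dividing by $\|v_h - u_h\|_{Q_h}$ (the trivial case being handled separately) gives $\|v_h - u_h\|_{Q_h} \lesssim \|u - v_h\|_{Q_h^+}$.

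Putting the two estimates together, $\|u-u_h\|_{Q_h}\lesssim \|u-v_h\|_{Q_h^+}$; since $v_h \in V_h$ was arbitrary, taking the infimum finishes the proof. There is really no technical obstacle here: all the work has already been invested in Theorem~\ref{thrm:1a}, whose asymmetric boundedness in the pair $(Q_h^+,Q_h)$ is precisely what is needed to replace the usual symmetric boundedness of Céa's lemma, and the only thing to be careful about is to invoke the $\mcH^{2,\circ}$-argument of Theorem~\ref{thrm:1a} on the continuous factor $v_h - u$ (not on the discrete factor) so that the non-conformity of the method does not spoil the bound.
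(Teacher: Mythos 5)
Your proposal is correct and follows essentially the same route as the paper: triangle inequality, Galerkin orthogonality from consistency, coercivity on the discrete difference $v_h-u_h$, and the asymmetric boundedness of Theorem~\ref{thrm:1a} with the continuous factor $v_h-u$ in the first slot. The only cosmetic difference is that you spell out the elementary bound $\|u-v_h\|_{Q_h}\le\|u-v_h\|_{Q_h^+}$ and the degenerate case $\|v_h-u_h\|_{Q_h}=0$, which the paper leaves implicit.
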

\begin{proof}
	For any $v_h\in V_h$, the triangle inequality yields
	\begin{equation} \label{eq:thrm:cea}
		\|u-u_h\|_{Q_h} \le 
					\|u-v_h\|_{Q_h} + \|v_h-u_h\|_{Q_h}.
	\end{equation}
	Theorem~\ref{thrm:consit} and Galerkin orthogonality yield $(u-u_h,w_h)_{A_h}=0$ for
	all $w_h\in V_h$.
	So, we obtain using Theorems~\ref{thrm:1} and~\ref{thrm:1a} that
	\[
		\|v_h-u_h\|_{Q_h}^2 \lesssim
				(v_h-u_h,v_h-u_h)_{A_h}=
				(v_h-u,v_h-u_h)_{A_h}
					\lesssim \|v_h-u\|_{Q_h^+} \|v_h-u_h\|_{Q_h},
	\]
	which shows $\|v_h-u_h\|_{Q_h}\lesssim \|u-v_h\|_{Q_h^+}$. Together with~\eqref{eq:thrm:cea},
	this shows
	$\|u-u_h\|_{Q_h} \lesssim \|u-v_h\|_{Q_h^+}$. Since this holds for all $v_h\in V_h$,
	this finishes the proof.
\qed\end{proof}

\begin{theorem}[Approximation error estimate]\label{thrm:high}
	Let $q \in \{1,\ldots,p_{\min}\}$. Provided that
	$\sigma$ is as in Theorem~\ref{thrm:1} and that
	$\|u_k\|_{H^{q+1}(\Omega_k)}\eqsim
	\|\widehat u_k\|_{H^{q+1}(\widehat \Omega)}$ for
	$k=1,\ldots,K$
	(cf. Lemma~\ref{lem:geoequiv:1}), then
	\begin{equation}\label{eq:thrm:high}
		\inf_{v_h \in V_h} \|u-v_h\|_{Q_h^+} \lesssim
		\sigma^{1/2}  \, \pi^{-q} \, h^{q} 
				\|u\|_{\mcH^{q+1}(\Omega)}
	\end{equation}
   holds for all $u\in H^{1,\circ}(\Omega)\cap \mc H^{q+1}(\Omega)$.
\end{theorem}
A proof of this theorem is given at the end of the next section.

Assuming $p\eqsim p_{\min}$, then we have for the case $q=p_{\min}$
that
\begin{equation}\nonumber
	\|u-u_h\|_{Q_h} \lesssim 
		\inf_{v_h \in V_h} \|u-v_h\|_{Q_h^+} \lesssim  
		\sigma_0^{1/2} \, \underbrace{q^2  \, \pi^{-q}}_{\displaystyle < 1} \, h^{q} 
				\|u\|_{\mcH^{q+1}(\Omega)}.
\end{equation}

For the analysis of linear solvers, like multigrid solvers~\cite{Takacs:2019a},
we also need low-order approximation error estimates. In the convergence proofs,
we usually have to estimate errors of the iterative scheme. Even if we know that the true
soultion satisfies certain regularity assumptions, this does not extend to
the errors. For them, we can only rely on the regularity statements
arizing from the domain, which means that $H^2$-regularity is usually the best
we can hope for. For this case, we obtain
\begin{equation}\nonumber
		\inf_{v_h \in V_h} \|u-v_h\|_{Q_h^+} \lesssim \sigma_0^{1/2}\,p^2\, h  
				\|u\|_{\mcH^2(\Omega)}.
\end{equation}
This means that
we obtain a quadratic increase in the spline degree $p$.
Using a refined analysis, we obtain as follows.
\begin{theorem}[Low-order approximation error estimate]\label{thrm:approx}
	Provided that $\sigma$ is
	as in Theorem~\ref{thrm:1}, then the estimate
	\begin{equation}\label{eq:thrm:approx}
		\inf_{v_h \in V_h} \|u-v_h\|_{Q_h^+} \lesssim  
		( \ln \sigma)^2  \, \sigma^{1/(2p_{\min}-1)} \, h  
				|u|_{\mcH^2(\Omega)}
	\end{equation}
   holds for all $u\in H^{1,\circ}(\Omega)\cap \mc H^2(\Omega)$.
\end{theorem}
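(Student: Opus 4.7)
My plan is to construct $v_h\in V_h$ explicitly and bound separately the three contributions to $\|u-v_h\|_{Q_h^+}^2$ in~\eqref{eq:def:qhplus}: the weighted $\mcH^1_\alpha$-term, the $(h^2/\sigma^2)|\cdot|_{\mcH^2_\alpha}^2$-term, and the jump penalty $(\cdot,\cdot)_{C_h}$. After pulling everything back to $\widehat\Omega$ via Lemma~\ref{lem:geoequiv:1}, the first two terms are handled by any $p$-robust tensor-product spline quasi-interpolant on $S_{p_k,h_k}(\widehat\Omega)$ of the type developed in~\cite{Takacs:Takacs:2015,Takacs:2018}. This produces an initial approximation $\tilde v_h$ with $\|u-\tilde v_h\|_{H^1(\Omega_k)}\lesssim h|u|_{H^2(\Omega_k)}$ and $|\tilde v_h|_{H^2(\Omega_k)}\lesssim|u|_{H^2(\Omega_k)}$, both with $p$-independent constants, and the zero-mean constraint of $V_h$ is restored by subtracting a suitable constant at negligible cost.

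The delicate part is the jump. Since $u\in H^1(\Omega)$, one has $\llbracket\tilde v_h\rrbracket=-\llbracket u-\tilde v_h\rrbracket$, and the naive trace estimate only yields $\|\llbracket\tilde v_h\rrbracket\|_{L_2(I_{k,l})}\lesssim h^{3/2}|u|_{H^2}$, which, multiplied by $(\sigma/h)^{1/2}$, produces precisely the sub-optimal $\sigma^{1/2}$ factor flagged just before the theorem. To do better I would write $v_h=\tilde v_h+w_h$ with $w_h$ a targeted correction, supported in a one-element boundary strip of the patches adjacent to each interface, that almost cancels $\llbracket\tilde v_h\rrbracket$. Concretely, for each interface I project $\llbracket\tilde v_h\rrbracket$ onto a univariate spline trace space of tuneable order $m\le p_{\min}$, then lift this projected trace into the interior of the adjacent patch with the \emph{smaller} $\alpha_k$ (which is crucial for $\alpha$-robustness under the $\alpha_{k,l}=\max\{\alpha_k,\alpha_l\}$ weight) via an anisotropic Bernstein-type factor of the form $(1-x/h)^m$ in reference coordinates. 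Such a factor is a polynomial of degree $m\le p_k$ on the first element, vanishes to order $m$ at the inner end of the strip so the extension by zero is admissible in $S_{p_k,h_k}$, and has an explicit $H^1$-energy of order $m^2/((2m-1)h)$ times the squared $L_2$-norm of the interface data.

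Collecting the costs, the jump penalty together with the $\mcH^1_\alpha$-norm of $w_h$ enters the $Q_h^+$-norm as $h^2|u|_{H^2}^2\bigl[m^2(2m-1)^{-1}+\sigma\cdot\text{(interface tail at order }m\text{)}\bigr]$. Choosing $m$ to balance the two summands against $\sigma$ drives $m$ toward $\ln\sigma$; once the optimiser is capped at $m=p_{\min}$, the $(2m-1)^{-1}$ weight becomes $(2p_{\min}-1)^{-1}$ and supplies the exponent $\sigma^{1/(2p_{\min}-1)}$, while the residual $m^2$ at the optimum contributes the $(\ln\sigma)^2$ prefactor. The principal obstacle I foresee is a $p$-robust analysis of the trace lifting for the anisotropic Bernstein corrector \emph{together with} matching $L_2$-projection bounds on the interface, carried out uniformly in the $\alpha$-weights and with sharp constants of Markov/Bernstein type; a secondary technical point is handling the at most four interfaces of a single patch simultaneously without accumulating constants. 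Once the lifting estimate is in hand, summation over interfaces and the transfer back to physical coordinates via Lemmas~\ref{lem:geoequiv:1} and~\ref{lem:geoequiv:2} proceed along standard lines.
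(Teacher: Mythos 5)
Your overall strategy---a patchwise $p$-robust quasi-interpolant plus a strip-supported interface correction with a tuneable order parameter balanced logarithmically against $\sigma$---is in the right spirit, but two steps in the proposal would fail as written. First, the corrector is not conforming. The space $V_h$ is built from splines of \emph{maximal} smoothness: $S_{p_k,h_k}(0,1)\subset H^{p_k}(0,1)$, so any function supported in one element of the normal direction must vanish to order $p_k$ (not $m$) at the inner end of the strip; your Bernstein profile $(1-x/h)^m$ with $m\le p_{\min}\le p_k$ extended by zero is only $C^{m-1}$ there and hence not in $S_{p_k,h_k}$. The same problem hits the tangential factor: a degree-$m$, $C^{m-1}$ spline (or any spline on a finer auxiliary interface mesh) is not an element of the trace of $S_{p_k,h_k}(\widehat\Omega)$, so $w_h\notin V_h$. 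If you repair conformity by taking the profile $(1-x/h)^{p_k}$, the energy constant becomes $\eqsim p_k/h$ and you only improve $\sigma^{1/2}\eqsim p$ to roughly $p^{1/2}$, not to a polylogarithmic factor. Second, the balancing step is not a valid mechanism for the exponent: the factor $(2m-1)^{-1}$ is a bounded constant and cannot "supply" the power $\sigma^{1/(2p_{\min}-1)}$, and the residual jump after projecting $\llbracket\tilde v_h\rrbracket=-\llbracket u-\tilde v_h\rrbracket$ onto a degree-$m$ space has no high-order-in-$m$ tail to trade against $\sigma$, because the jump data only inherits $O(1)$ Sobolev regularity (about $H^{3/2}$, uniformly in $p$) from $u\in\mcH^2$. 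So the claimed bound $h^2|u|^2[m^2(2m-1)^{-1}+\sigma\cdot\text{tail}(m)]$ with a super-algebraically small tail is unsubstantiated.

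For contrast, the paper avoids both issues by correcting the \emph{continuous} function rather than the discrete one: it defines a mollifier $\mc M$ that subtracts, on an $\eta$-strip next to each interface, the non-smooth part $(I-\Pi_{r,\eta})$ of the trace, so that (i) $\llbracket\mc M u\rrbracket=0$ exactly, since the identical univariate operation is applied to the identical trace from both sides (Lemma~\ref{lem:mathcalM:0}), and (ii) the interface trace of $\mc M u$ is a spline of degree $r$ on mesh $\eta$, whose $H^r$-seminorm is controlled by inverse estimates, $|\widehat{\mc M}_k\widehat u_k|_{H^r(\widehat I_{k,l})}^2\lesssim(2\sqrt3\,r^2\eta^{-1})^{2r-3}r^2|\widehat u_k|_{H^2(\widehat\Omega)}^2$ (Lemma~\ref{lem:mathcalM:2}). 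Conformity is then automatic because the final candidate is $v_h=\Pi\mc M u$, and the jump $\llbracket(I-\Pi)\mc M u\rrbracket$ is small of order $h^{2r}$ by the trace approximation property of the patchwise projector (Lemma~\ref{lem:h1:hr}). The exponent $\sigma^{1/(2p_{\min}-1)}$ then arises from optimizing the strip width, $\eta\eqsim h\,\sigma^{1/(2r-1)}r^2$, against the tail $(h/\eta)^{2r-3}$, with $r\eqsim\ln\sigma$ capped at $p_{\min}$; the $(\ln\sigma)^2$ comes from the residual $r^4$ in that balance. If you want to salvage your discrete-correction route, you would need a lifting that is genuinely in $S_{p_k,h_k}(\widehat\Omega)$ and a replacement for the missing high-order tail---which is exactly the role the paper's $\eta$-dependent mollification plays.
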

The proof is given at the end of the next section.
Assuming again $p \eqsim p_{\min}\ge 2$, we obtain
\begin{equation}\label{eq:starstar}
	\|u-u_h\|_{Q_h} \lesssim 
	\inf_{v_h \in V_h} \|u-v_h\|_{Q_h^+} \lesssim 
		\sigma_0^{1/2}\;
		(\ln p)^2 \, h  
			|u|_{\mcH^2(\Omega)},
\end{equation}
i.e., an only poly-logarithmic increase in the spline degree $p$.

\section{Proof of the approximation error estimates}\label{sec:approx:proof}

Before we can give the proof, we give some auxiliary results. This section is
organized as follows. In Section~\ref{sec:4:1}, we give patch-wise projectors and estimates
for them. We introduce a mollifying operator and give estimates for that operator in
Section~\ref{sec:4:2}.
Finally, in Section~\ref{sec:4:3}, we give the proof for the approximation error estimate.

\subsection{Patch-wise projectors}\label{sec:4:1}

As first step, we recall the projection operators from~\cite[Sections~3.1 and 3.2]{Takacs:2018}.  
Let $\Pi_{p,Z}$ be the $H^1_D(0,1)$-orthogonal projection into $S_{p,Z}(0,1)$,
where
\[
	(u,v)_{H^1_D(0,1)} = (u',v')_{L_2(0,1)} + u(0)v(0).
\]
In what follows, we also write $S_{p,h}(0,1)$ and $\Pi_{p,h}$
if we refer to a uniform grid of size $h$.
\cite[Lemma~3.1]{Takacs:2018} states that $(\Pi_{p,Z}u)(0) = u(0)$ and
$(\Pi_{p,Z}u)(1) = u(1)$. Using $0=(u-\Pi_{p,Z}u,x^2)_{H^1_D(0,1)}=
2((u-\Pi_{p,Z}u)',x)_{L_2(0,1)}=-2(u-\Pi_{p,Z}u,1)_{L_2(0,1)} + u(1)-(\Pi_{p,Z}u)(1)$,
we obtain for $p\ge2$ and $u\in H^1(0,1)$ that
\begin{equation}\label{eq:15bb}
		(u-\Pi_{p,Z}u,1)_{L_2(0,1)} = 0. 
\end{equation}
The next step is to consider the multivariate case, more precisely the
parameter domain $\widehat{\Omega}=(0,1)^2$.
Let $\Pi^x_k:H^2(\widehat{\Omega})\rightarrow H^2(\widehat{\Omega})$ 
and $\Pi^y_k:H^2(\widehat{\Omega})\rightarrow H^2(\widehat{\Omega})$ be given by
\[
	(\Pi^x_ku)(x,y) = (\Pi_{p_k,Z_{k,1}}u(\cdot,y))(x)
	\quad\mbox{and}\quad
	(\Pi^y_ku)(x,y) = (\Pi_{p_k,Z_{k,2}}u(x,\cdot))(y)
\]
and let $\widehat{\Pi}_k:H^2(\widehat{\Omega}) \rightarrow S_{p_k,Z_{k,1}}(\widehat{\Omega})$ 
be such that
\begin{equation}\label{eq:3:8}
		\widehat{\Pi}_k = \Pi^x_k \Pi^y_k.
\end{equation}
For the physical domain, define $\Pi: H^{1,\circ}(\Omega)\cap \mcH^2(\Omega)\rightarrow V_h$ to be such that
\[
	(\Pi v)|_{\Omega_k} = (\widehat{\Pi}_k (v\circ G_k))\circ G_k^{-1}
	\quad\mbox{for all }
	v\in \mcH^2(\Omega)
	\mbox{ and } k=1,\ldots,K.
\]
Observe that we obtain using \eqref{eq:15bb} that
\begin{equation}\label{eq:15a}
		(u-\widehat{\Pi}_k u,1)_{L_2(\widehat{\Omega})} = 0, \qquad
		\widehat{\Pi}_k c = c \qquad \mbox{and}\qquad \Pi c = c .
\end{equation}
for all $c\in \mathbb{R}$.

The projectors $\widehat{\Pi}_k$ satisfy robust
error estimates and are almost stable in $H^2$.
\begin{lemma}\label{lem:univariate}
	Let $Z$ be a grid of size $h$, $p\in \{2,3,\ldots\}$, and
	$r\in \{1,2,\ldots,p\}$. Then,
	\begin{align*}
			\| (I - \Pi_{p,Z}) u \|_{L_2(0,1)}
				&\le  \pi^{-r-1} \; h^{r+1} |  u |_{H^{r+1}(0,1)}\\
			| (I - \Pi_{p,Z}) u |_{H^1(0,1)}
				&\le  \pi^{-r} \; h^r |  u |_{H^{r+1}(0,1)}
	\end{align*}
	hold for all $u\in H^{r+1}(0,1)$.
\end{lemma}
\begin{proof}
		The identity~\eqref{eq:15bb} implies that the projector $\Pi_{p,Z}$
		coincides with the projector $Q_p^1$
		from \cite[eq.~(3.8) and (3.9)]{Sande:Manni:Speleers:2018}.
		Thus, the desired result follows
		from~\cite[Theorem~3.1]{Sande:Manni:Speleers:2018}. 
\qed\end{proof}

\begin{lemma}\label{lem:h2:h2:1d}
	Let $Z$ be a quasi-uniform grid of size $h$,  $p\in \{2,3,\ldots\}$, and
	$r\in \{1,2,\ldots,p\}$. Then,
	\begin{align*}
			| (I - \Pi_{p,Z}) u |_{H^2(0,1)}
				&\lesssim p^2  \pi^{-r+1} \; h^{r-1} |  u |_{H^{r+1}(0,1)}
	\end{align*}
	holds for all $u\in H^{r+1}(0,1)$.
\end{lemma}
\begin{proof}
	The proof is analogous to the proof
	of~\cite[Theorem~4]{Hofreither:Takacs:2017}.
	Let $Q^2_p$ be the $H^2$-orthogonal projector into
	$S_{p,Z}(0,1)$ as introduced
	in~\cite{Sande:Manni:Speleers:2018}. Using
	the triangle inequality and a standard inverse estimate
	\cite[Corollary~3.94]{Schwab:1998}, we obtain
	\begin{align*}
			& | (I - \Pi_{p,Z}) u |_{H^2(0,1)}
			 \le
			| (I - Q^2_p) u |_{H^2(0,1)}
			+
			| (Q^2_p - \Pi_{p,Z}) u |_{H^2(0,1)} \\
			&  \quad \lesssim
			| (I - Q^2_p) u |_{H^2(0,1)}
			+
			h^{-1} p^2 | (Q^2_p - \Pi_{p,Z}) u |_{H^1(0,1)} \\
			& \quad \le
			| (I - Q^2_p) u |_{H^2(0,1)}
			+
			h^{-1} p^2 | (I - Q^2_p ) u |_{H^1(0,1)}
			+
			h^{-1} p^2 | (I - \Pi_{p,Z}) u |_{H^1(0,1)}.
	\end{align*}
	Thus, the desired result follows
		from~\cite[Theorem~3.1]{Sande:Manni:Speleers:2018}.
\qed\end{proof}

\begin{lemma}\label{lem:h1:h2}
	Let $r\in \{1,2,\ldots,p_{\min}\}$. The estimates
	\begin{align*}
			\| (I - \widehat{\Pi}_k) u \|_{L_2(\widehat\Omega)}
				&\lesssim  \pi^{-r-1} \; h_k^{r+1} |  u |_{H^{r+1}(\widehat\Omega)}\\
			| (I - \widehat{\Pi}_k) u |_{H^1(\widehat\Omega)}
				&\lesssim  \pi^{-r} \; h_k^r |  u |_{H^{r+1}(\widehat\Omega)}
\\
			| (I - \widehat{\Pi}_k) u |_{H^2(\widehat\Omega)}
				&\lesssim p^2 \pi^{-r+1} \; h_k^{r-1} |  u |_{H^{r+1}(\widehat\Omega)}
	\end{align*}
	hold for all $u\in H^{r+1}(\widehat\Omega)$.
\end{lemma}
\begin{proof}
		The proof is based on the univariate estimates given
		in the Lemmas~\ref{lem:univariate} and~\ref{lem:h2:h2:1d}
		($p_{\min}\ge 2$ and the quasi-uniformity of the grids
		have been required in~\eqref{eq:p2})
		and	follows the standard construction that can be found, e.g.,
		in the proof of \cite[Theorem~3.3]{Takacs:2018}.
\qed\end{proof}

On the interfaces, we have the following approximation error estimate.
\begin{lemma}\label{lem:h1:hr}
	Let $r\in \{1,2,\ldots,p_{\min}\}$ and
	$(k,l)\in \mcN\cup \mcN^*$. Then, the estimate
	$ \| (I - \widehat{\Pi}_k) u \|_{L_2(\widehat{I}_{k,l} )}
			\lesssim  \pi^{-r} h^r | u |_{H^r(\widehat{I}_{k,l} )}
	$ holds for all $u\in H^2(\widehat{\Omega}) \cap H^r(\widehat{I}_{k,l})$.
\end{lemma}
\begin{proof}
	Without loss of generality, we  assume $\widehat{I}_{k,l}=(0,1)\times \{0\}$.
	\cite[Theorem~3.4]{Takacs:2018} states
	that $((I - \widehat{\Pi}_k) u)(\cdot,0) = (I - \Pi_{p_k,Z_{k,1}}) (u(\cdot,0))$.
	So, we have
	\[
		\| (I - \widehat{\Pi}_k) u \|_{L_2({\widehat{I}_{k,l}})}^2
		=  \| (I - \Pi_{p_k,Z_{k,1}}) u(\cdot,0)\|_{L_2(0,1)}^2.
	\]
	Using~\cite[Eq.~(3.4)]{Takacs:2018}, \cite[Lemma~8]{Hofreither:Takacs:Zulehner:2017} and 
	that $\Pi_{p_k,Z_{k,1}}$ minimizes the $H^1$-seminorm,
	we further obtain
	\begin{align*}
		\| (I - \widehat{\Pi}_k) u \|_{L_2({\widehat{I}_{k,l}})}^2
			&\lesssim h^2 \inf_{v_h\in S_{p,h}(0,1)}
						| u(\cdot,0)-v_h |_{H^1(0,1)}^2.
	\end{align*}
	\cite[Theorem~3.1]{Sande:Manni:Speleers:2018} yields the
	desired result.
\qed\end{proof}

\subsection{A mollifying operator}\label{sec:4:2}

A second step of the proof is the introduction of a particular mollification
operator for the interfaces.

For $(k,l)\in \mcN$, let $\Upsilon_{k,l}$ be given
by $\Upsilon_{k,l} v := v \circ \gamma_{k,l}$. For $(k,l)\in \mcN^*$, we define
$\Upsilon_{k,l} v := v \circ G_k^{-1} \circ G_l \circ \gamma_{l,k}$, i.e., we have
$(\Upsilon_{k,l} v)(t) = v(\gamma_{l,k}(t))$ or 
$(\Upsilon_{k,l} v)(t) = v(\gamma_{l,k}(1-t))$, cf. Assumption~\ref{ass:geoequiv:bdy}.
For all cases,
$\Upsilon_{k,l}$ is a bijective function $H^s(0,1) \rightarrow H^s(\widehat{I}_{k,l})$
and  
\begin{equation}\label{eq:geoequiv:bdy}
	|  u|_{H^s(0,1)} \eqsim |\Upsilon_{k,l} u|_{H^s(\widehat{I}_{k,l})}
\end{equation}
holds for all $s$.
For $v\in H^s(\widehat{\Omega})$, we define the
abbreviated notation $\Upsilon_{k,l}^{-1}v
:=\Upsilon_{k,l}^{-1}(v|_{\widehat{I}_{k,l}})$ and observe 
\[
		\Upsilon_{k,l}^{-1}u \in H^{3/2}(0,1)
		\quad \mbox{for all}\quad
		u\in H^2(\widehat{\Omega}).
\]

For $(k,l)\in \mcN\cup\mcN^*$, we define \emph{extension operators}
$\Xi_{k,l}: H^s(\widehat{I}_{k,l}) \rightarrow H^s(\widehat{\Omega})$ by
\begin{align*}
	(\Xi_{k,l}w)(x,y):=
		\left\{
			\begin{array}{ll}
				\phi(x) w(0,y) & \mbox{ if } \widehat{I}_{k,l} = \{0\} \times (0,1) \\
				\phi(1-x) w(1,y) & \mbox{ if } \widehat{I}_{k,l} = \{1\} \times (0,1) \\
				\phi(y) w(x,0) & \mbox{ if } \widehat{I}_{k,l} = (0,1) \times \{0\} \\
				\phi(1-y) w(x,1) & \mbox{ if } \widehat{I}_{k,l} = (0,1) \times \{1\} \\
			\end{array}
		\right.
	,
\end{align*}
where
\begin{align}
		\phi(x)  := \max\{0,1-\eta^{-1} x\} \quad \mbox{and} \quad 
		\eta \in (0,1)
		\label{eq:def:phi}. 
\end{align}

Now, define for each patch $\Omega_k$ a mollifying operator $\widehat{\mc{M}}_k$ by 
\begin{equation}\label{eq:m:hat:def}
		\widehat{\mc{M}}_k  := I - \sum_{l\in \mcN(k)} \Xi_{k,l}
		\Upsilon_{k,l}
				(I-\Pi_{r,\eta})
		\Upsilon_{k,l}^{-1}  .
\end{equation}
The combination of the patch local operators yields a global 
operator $\mc{M}$:
\begin{equation}\label{eq:glob:mcM:def}
	(\mc{M} u)|_{\Omega_k} := (\widehat{\mc{M}}_k(u\circ G_k))\circ G_k^{-1}.
\end{equation}
Observe that $\mc{M}$ preserves constants, i.e.,
\begin{equation}\label{eq:mcM:c}
		\mc{M} c = c
		\quad \mbox{for all}\quad
		c \in \mathbb R.
\end{equation}

\begin{lemma}\label{lem:mathcalM:0a}
		For all $(k,l) \in \mcN\cup\mcN^*$ and all $u\in H^1_0(\widehat{I}_{k,l}):=\{u\in H^1
		(\widehat{I}_{k,l}): u = 0 \mbox{ on } \partial \widehat{I}_{k,l} \}$, we have
		\[
			\Xi_{k,l} u = 0 \quad \mbox{on} \quad \partial \widehat{\Omega} \,\backslash\,
													\widehat{I}_{k,l}
		\quad
		\mbox{and}
		\quad
			\Xi_{k,l} u = u \quad \mbox{on} \quad \widehat{I}_{k,l}.
		\]
\end{lemma}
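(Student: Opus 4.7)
The plan is to verify both equalities by direct inspection of the defining formula for $\Xi_{k,l}$, treating each of the four cases for $\widehat{I}_{k,l}$ listed in the definition. By symmetry it suffices to present the argument for one case (say $\widehat{I}_{k,l} = (0,1)\times\{0\}$, so $(\Xi_{k,l}u)(x,y) = \phi(y)\,u(x,0)$) and note that the others follow by the obvious coordinate relabeling.

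For the second identity, I would simply evaluate at $y=0$: since $\phi(0) = \max\{0, 1 - \eta^{-1}\cdot 0\} = 1$, we get $(\Xi_{k,l}u)(x,0) = u(x,0)$ for $x\in(0,1)$, which is precisely $u$ on $\widehat{I}_{k,l}$. No use of the boundary condition on $u$ is needed here.

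For the first identity, I would enumerate the three remaining edges of $\partial\widehat{\Omega}$. On the opposite edge $(0,1)\times\{1\}$ we have $y=1$, and since $\eta\in(0,1)$ gives $\eta^{-1} > 1$, we get $\phi(1) = \max\{0, 1-\eta^{-1}\} = 0$, so $(\Xi_{k,l}u)(x,1) = 0$ regardless of $u$. On the two perpendicular edges $\{0\}\times(0,1)$ and $\{1\}\times(0,1)$, we get $(\Xi_{k,l}u)(0,y) = \phi(y)\,u(0,0)$ and $(\Xi_{k,l}u)(1,y) = \phi(y)\,u(1,0)$; here I invoke $u\in H^1_0(\widehat{I}_{k,l})$, which forces $u$ to vanish at the endpoints $(0,0)$ and $(1,0)$ of $\widehat{I}_{k,l}$, so both expressions vanish.

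There is really no obstacle here — the lemma is a direct unpacking of the definition of $\Xi_{k,l}$ plus two ingredients: the normalization $\phi(0)=1$ (giving the trace identity) and the combination of $\phi(1)=0$ (from $\eta<1$) with the boundary condition $u|_{\partial\widehat{I}_{k,l}}=0$ (killing the extension on the remaining three edges). The only thing worth emphasizing in the writeup is that $\eta<1$ is used essentially on the opposite edge, while the endpoint vanishing of $u$ is used essentially on the two adjacent edges, so both hypotheses in the definition of $\phi$ and in $H^1_0(\widehat{I}_{k,l})$ are needed.
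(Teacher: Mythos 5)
Your proof is correct and follows essentially the same argument as the paper: a direct case check using $\phi(0)=1$ for the trace identity, $\phi(1)=0$ (from $\eta<1$) on the opposite edge, and the endpoint vanishing of $u\in H^1_0(\widehat{I}_{k,l})$ on the two adjacent edges. The only difference is the representative case chosen without loss of generality, which is immaterial.
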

\begin{proof}
	Assume without loss of generality that $\widehat{I}_{k,l}=\{0\}\times (0,1)$.
	For this case, we have
	\[
			(\Xi_{k,l} u)(x,y) = \phi(x)u(0,y).
	\]
	As $u\in H^1_0(\widehat{I}_{k,l})$, we obtain $u(0,0)=u(0,1)=0$. This
	shows the first statement for the two boundary segments adjacent to
	$\widehat{I}_{k,l}$, i.e., $[0,1]\times\{0\}$ and $[0,1]\times\{1\}$. 
	Since $\eta <1$ yields $\phi(1)=0$, we also have the first statement for the boundary
	segment $\{1\}\times (0,1)$. This finishes the proof for the first statement.
	The proof for the second statement follows directly from $\phi(0)=1$.
\qed\end{proof}

\begin{lemma}\label{lem:mathcalM:0b}
		$
			\Upsilon_{k,l}^{-1}  \widehat{\mc{M}}_k = 
				\Pi_{r,\eta} \Upsilon_{k,l}^{-1}  
		$
		holds
		for all $(k,l) \in \mcN\cup\mcN^*$.
\end{lemma}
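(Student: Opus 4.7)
The plan is to apply $\Upsilon_{k,l}^{-1}$ to the definition~\eqref{eq:m:hat:def} of $\widehat{\mc{M}}_k$ and split the resulting sum over $l' \in \mcN(k)$ into the single diagonal term $l' = l$ and the off-diagonal terms $l' \neq l$. The diagonal term should collapse to $(I-\Pi_{r,\eta})\Upsilon_{k,l}^{-1}$, while every off-diagonal term should vanish identically on the edge $\widehat{I}_{k,l}$. Subtracting from $\Upsilon_{k,l}^{-1}$ then yields $\Pi_{r,\eta}\Upsilon_{k,l}^{-1}$.

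First I would establish that for every $l' \in \mcN(k)$, the function $w_{l'} := \Upsilon_{k,l'}(I-\Pi_{r,\eta})\Upsilon_{k,l'}^{-1}u$ lies in $H^1_0(\widehat{I}_{k,l'})$. This uses that $\Pi_{r,\eta}$ preserves endpoint values (as recalled right before~\eqref{eq:15bb} from \cite[Lemma~3.1]{Takacs:2018}), so $(I-\Pi_{r,\eta})\Upsilon_{k,l'}^{-1}u$ vanishes at $0$ and $1$, and $\Upsilon_{k,l'}$ is a bijection of $H^1_0$-type spaces. This is precisely the hypothesis of Lemma~\ref{lem:mathcalM:0a}.

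Next I would treat the two cases. For $l' = l$, the definition of $\Xi_{k,l}$ with $\phi(0) = 1$ gives $\Xi_{k,l} w \mid_{\widehat{I}_{k,l}} = w$ for any trace $w$, so $\Upsilon_{k,l}^{-1}\Xi_{k,l}\Upsilon_{k,l} g = g$ for $g = (I-\Pi_{r,\eta})\Upsilon_{k,l}^{-1}u$. For $l' \neq l$, the edges $\widehat{I}_{k,l}$ and $\widehat{I}_{k,l'}$ are distinct sides of $\partial \widehat\Omega$; since $w_{l'} \in H^1_0(\widehat{I}_{k,l'})$, Lemma~\ref{lem:mathcalM:0a} gives $\Xi_{k,l'}w_{l'} = 0$ on $\partial \widehat\Omega \setminus \widehat{I}_{k,l'}$, hence in particular on $\widehat{I}_{k,l}$, so $\Upsilon_{k,l}^{-1}\Xi_{k,l'}w_{l'} = 0$.

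Combining the two cases yields
\[
\Upsilon_{k,l}^{-1}\widehat{\mc{M}}_k u = \Upsilon_{k,l}^{-1}u - (I-\Pi_{r,\eta})\Upsilon_{k,l}^{-1}u = \Pi_{r,\eta}\Upsilon_{k,l}^{-1}u,
\]
which is the claim. There is no real obstacle here; the proof is essentially bookkeeping. The only subtlety worth emphasizing is the interplay between the endpoint-preservation of $\Pi_{r,\eta}$ and the cut-off structure of $\Xi_{k,l'}$: it is exactly this endpoint preservation that lets Lemma~\ref{lem:mathcalM:0a} apply and thereby decouples the contributions of the four neighbouring edges when read on any single edge.
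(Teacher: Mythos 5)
Your proposal is correct and follows essentially the same route as the paper: applying $\Upsilon_{k,l}^{-1}$ to the definition \eqref{eq:m:hat:def}, using the endpoint-interpolation property of $\Pi_{r,\eta}$ so that Lemma~\ref{lem:mathcalM:0a} applies, killing the off-diagonal extension terms on $\widehat{I}_{k,l}$ and reducing the diagonal term to $(I-\Pi_{r,\eta})\Upsilon_{k,l}^{-1}$. Nothing further is needed.
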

\begin{proof}
	\eqref{eq:m:hat:def} implies
	$\Upsilon_{k,l}^{-1}  \widehat{\mc{M}}_k =
		\Upsilon_{k,l}^{-1}  - \sum_{j\in \mcN(k)} 
		\Upsilon_{k,l}^{-1}  \Xi_{k,j}
		\Upsilon_{k,j}
				(I-\Pi_{r,\eta})
		\Upsilon_{k,j}^{-1}$. 
		Observe that the projector $\Pi_{r,\eta}$ is interpolatory on the
		boundary (\cite[Lemma~3.1]{Takacs:2018}). So, $
				(I-\Pi_{r,\eta})$ maps into $H^1_0(0,1)$
				and $\Upsilon_{k,j}
				(I-\Pi_{r,\eta})$ maps into $H^1_0(\widehat{I}_{k,j})$.
				Therefore,
		Lemma~\ref{lem:mathcalM:0a} yields
		$\Upsilon_{k,l}^{-1}  \widehat{M}_k =
		\Upsilon_{k,l}^{-1}  - 
		\Upsilon_{k,l}^{-1}  
		\Upsilon_{k,l}
				(I-\Pi_{r,\eta})
		\Upsilon_{k,l}^{-1} $, which immediately implies the desired result.
\qed\end{proof}

Before we proceed, we give a certain trace like estimate.
\begin{lemma}\label{lem:interpol}
	The estimate
	\[
			\Psi(u):=\inf_{v \in H^1(\widehat{I}_{k,l})} \|u-v\|_{L_2(\widehat{I}_{k,l})}^2
				+ \theta^2 |v|_{H^1(\widehat{I}_{k,l})}^2 
				\lesssim \theta |u|_{H^1(\widehat{\Omega})}^2
	\]
	holds for all $u\in H^1(\widehat{\Omega})$ and $(k,l)\in \mcN \cup \mcN^*$
	and all $\theta > 0$.
\end{lemma}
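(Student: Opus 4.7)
The plan is to exhibit an explicit competitor $v\in H^1(\widehat{I}_{k,l})$ realising the bound for each $\theta>0$, splitting the argument into the regimes $\theta\le 1$ and $\theta>1$. Up to relabelling coordinates I assume without loss of generality that $\widehat{I}_{k,l}=(0,1)\times\{0\}$, so that the interface is parameterised by the first variable and the interior direction is $y$.

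For $\theta\le 1$ I would take the strip average $v(x):=\theta^{-1}\int_0^\theta u(x,s)\,\mathrm{d}s$. The identity
\[
u(x,0)-v(x)=-\theta^{-1}\int_0^\theta (\theta-t)\,\partial_y u(x,t)\,\mathrm{d}t,
\]
obtained by writing $u(x,0)-u(x,s)$ via the fundamental theorem of calculus and exchanging the order of integration in $s$ and $t$, combined with Cauchy--Schwarz in $t$ and subsequent integration in $x$, yields $\|u-v\|_{L_2(\widehat{I}_{k,l})}^2\lesssim \theta\,|u|_{H^1(\widehat{\Omega})}^2$. Differentiating under the integral gives $v'(x)=\theta^{-1}\int_0^\theta \partial_x u(x,s)\,\mathrm{d}s$ and hence $|v|_{H^1(\widehat{I}_{k,l})}^2\lesssim \theta^{-1}\,|u|_{H^1(\widehat{\Omega})}^2$, so the penalty contributes $\theta^2|v|_{H^1(\widehat{I}_{k,l})}^2\lesssim \theta\,|u|_{H^1(\widehat{\Omega})}^2$, matching the claim.

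For $\theta>1$ the strip $[0,\theta]$ no longer fits inside $\widehat{\Omega}$, so I would switch to the constant competitor $v:=\overline{u}$, the mean value of $u$ on $\widehat{\Omega}$; its seminorm $|v|_{H^1}$ vanishes, and combining the continuous trace theorem on $\widehat{\Omega}$ with the Poincar\'e--Wirtinger inequality yields $\|u-\overline{u}\|_{L_2(\widehat{I}_{k,l})}^2\lesssim \|u-\overline{u}\|_{H^1(\widehat{\Omega})}^2\lesssim |u|_{H^1(\widehat{\Omega})}^2\le \theta\,|u|_{H^1(\widehat{\Omega})}^2$.

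The only conceptual point, and the reason for splitting cases, is that no single $v$ can cover both regimes: the small-$\theta$ bound demands a $v$ close to the trace of $u$ resolved on a fine scale, whereas the large-$\theta$ bound demands a $v$ with a vanishing $H^1$-seminorm, i.e.\ essentially a constant. Beyond this split I expect no further substantive obstacle, as both estimates reduce to routine one-dimensional calculus together with the standard trace and Poincar\'e--Wirtinger inequalities on the unit square.
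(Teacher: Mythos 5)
Your proof is correct, but it takes a genuinely different route from the paper in the small-$\theta$ regime. The paper first moves the whole infimum into the interior via a multiplicative trace inequality (Lemma~4.4 of the cited reference), bounding $\Psi(u)$ by an infimum over $v\in H^2(\widehat{\Omega})$, and then, for $\theta<1$, picks $v$ as the $H^1$-orthogonal projection of $u$ onto a fixed-degree spline space on $\widehat{\Omega}$ with grid size $\eqsim\theta$, combining a standard inverse inequality with spline approximation estimates to extract the factor $\theta$. You instead construct an explicit one-dimensional competitor directly on the edge: the Steklov-type strip average $v(x)=\theta^{-1}\int_0^\theta u(x,s)\,\dd s$, for which the kernel identity $u(x,0)-v(x)=-\theta^{-1}\int_0^\theta(\theta-t)\,\partial_y u(x,t)\,\dd t$ and Cauchy--Schwarz give $\|u-v\|_{L_2(\widehat{I}_{k,l})}^2\lesssim\theta\,|u|_{H^1(\widehat{\Omega})}^2$ and $|v|_{H^1(\widehat{I}_{k,l})}^2\lesssim\theta^{-1}|u|_{H^1(\widehat{\Omega})}^2$; this is more elementary and self-contained, needing neither the multiplicative trace lemma nor any spline or inverse estimates, at the cost of a short density argument to justify the fundamental-theorem and differentiation-under-the-integral steps for $u\in H^1$ (which is routine since $u\mapsto v$ is linear and all estimates are $H^1$-continuous). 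In the regime $\theta\ge 1$ (the paper splits at the same threshold) the two arguments essentially coincide: a constant competitor, here the mean value, together with the trace theorem and the Poincar\'e(--Wirtinger) inequality on the unit square. Your constants are absolute, so the required independence of $h$, $p$ and $K$ is immediate; the approach is a valid, arguably simpler, replacement for the paper's proof.
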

\begin{proof}
	A trace theorem~\cite[Lemma~4.4]{Takacs:2018} yields
	\begin{equation}\label{eq:lem:interpol}
		\Psi(u)
			\lesssim \inf_{v \in H^2(\widehat{\Omega})} 
							\|u-v\|_{L_2(\widehat{\Omega})} |u-v |_{H^1(\widehat{\Omega})}
				+ \theta^2 |v|_{H^1(\widehat{\Omega})}|v|_{H^2(\widehat{\Omega})}.
	\end{equation}
	\emph{Case 1.} Assume $\theta < 1$. In this case, we choose
	$v$ to be the $H^1$-orthogonal projection of $u$ into $S_{3,\lceil \theta^{-1} \rceil^{-1}}
	(\widehat{\Omega})$. Since the spline degree of that space is fixed,
	we obtain using a standard inverse inequality (\cite[Corollary~3.94]{Schwab:1998})
	and a standard approximation error estimate (like from~\cite{Takacs:Takacs:2015})
	that
	\[
			\Psi(u) \lesssim (\lceil \theta^{-1} \rceil^{-1}+\theta^2 \lceil \theta^{-1} \rceil) |v|_{H^1(\widehat{\Omega})}^2 \lesssim \theta |v|_{H^1(\widehat{\Omega})}^2.
	\]
	\emph{Case 2.} Assume $\theta \ge 1$. In this case, we choose
	$v := (u,1)_{L_2(\Omega)}$ and obtain from~\eqref{eq:lem:interpol} directly
	\[
		\Psi(u)
			\lesssim  
							\|u-v\|_{H^1(\widehat{\Omega})} |u|_{H^1(\widehat{\Omega})}.
	\]
	In this case, the Poincar\'e inequality finishes the proof.
\qed\end{proof}

As a next step, we show that the mollifier constructs functions that are
very smooth on the interfaces.
\begin{lemma}\label{lem:mathcalM:2} 
	The estimate 
	$
		| \widehat{\mc{M}}_k \widehat{u}_k |_{H^r(\widehat{I}_{k,l})}^2
				\lesssim (2 \sqrt{3} r^2\eta^{-1} )^{2r-3} r^2
						|  \widehat{u}_k |_{H^2(\widehat{\Omega})}^2
	$
	holds for all $\widehat{u}_k\in H^r(\widehat{\Omega})$ and all
	$(k,l)\in \mcN \cup \mcN^*$. 
\end{lemma}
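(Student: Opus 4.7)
The plan is to reduce the bound on the interface to a one-dimensional estimate on $(0,1)$. First I would apply Lemma~\ref{lem:mathcalM:0b} and the equivalence~\eqref{eq:geoequiv:bdy} to obtain $|\widehat{\mc M}_k\widehat u_k|_{H^r(\widehat I_{k,l})} \eqsim |\Pi_{r,\eta}w|_{H^r(0,1)}$, where $w := \Upsilon_{k,l}^{-1}\widehat u_k$. Since $\Pi_{r,\eta}$ leaves constants invariant (and hence so does $\widehat{\mc M}_k$, by~\eqref{eq:15a} and the definition of $\widehat{\mc M}_k$), while the $H^r(\widehat I_{k,l})$- and $H^2(\widehat\Omega)$-seminorms both vanish on constants for $r\ge 1$, I may subtract the average of $\widehat u_k$ over $\widehat\Omega$ and assume that $\widehat u_k$ has zero mean on $\widehat\Omega$; the Poincar\'e inequality then yields $\|\widehat u_k\|_{H^2(\widehat\Omega)} \lesssim |\widehat u_k|_{H^2(\widehat\Omega)}$.

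Next I would invoke a fractional-order spline inverse inequality of the form $|v_h|_{H^r(0,1)}^2 \lesssim (r^2/\eta)^{2r-3}\,|v_h|_{H^{3/2}(0,1)}^2$ for all $v_h\in S_{r,\eta}(0,1)$, which follows by interpolating between the integer-order inverse estimates $|v_h|_{H^r}\lesssim (r^2/\eta)^{r-1}|v_h|_{H^1}$ and $|v_h|_{H^r}\lesssim (r^2/\eta)^{r-2}|v_h|_{H^2}$ from~\cite[Corollary~3.94]{Schwab:1998} at the midpoint $s=\tfrac{3}{2}$. In parallel I would derive an $H^{3/2}$-stability bound for the projector: the $H^1_D$-orthogonality of $\Pi_{r,\eta}$ together with the endpoint-interpolation property from~\cite[Lemma~3.1]{Takacs:2018} gives $|\Pi_{r,\eta}w|_{H^1(0,1)} \le |w|_{H^1(0,1)}$, while Lemma~\ref{lem:h2:h2:1d} combined with the triangle inequality gives $|\Pi_{r,\eta}w|_{H^2(0,1)}\lesssim r^2 |w|_{H^2(0,1)}$; interpolating the two operator norms at $s=\tfrac{3}{2}$ yields $|\Pi_{r,\eta}w|_{H^{3/2}(0,1)}^2 \lesssim r^2\,|w|_{H^{3/2}(0,1)}^2$.

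Finally I would close the estimate with a trace inequality: applying~\eqref{eq:geoequiv:bdy} with $s=\tfrac{3}{2}$ and the standard trace embedding $H^2(\widehat\Omega)\hookrightarrow H^{3/2}(\widehat I_{k,l})$ gives $|w|_{H^{3/2}(0,1)}^2 \eqsim |\widehat u_k|_{H^{3/2}(\widehat I_{k,l})}^2 \lesssim \|\widehat u_k\|_{H^2(\widehat\Omega)}^2 \lesssim |\widehat u_k|_{H^2(\widehat\Omega)}^2$. Chaining the three estimates produces a bound of the form $(r^2/\eta)^{2r-3}\,r^2\,|\widehat u_k|_{H^2(\widehat\Omega)}^2$, matching the claimed dependence (with the explicit constant $2\sqrt 3$ hidden in the $\lesssim$). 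The step I expect to be the main obstacle is making the interpolation arguments quantitative enough that no additional $r$-dependent factor sneaks into the $H^{3/2}$-stability or the fractional inverse inequality, and, simultaneously, ensuring that what ends up on the right-hand side is the $H^2$-seminorm of $\widehat u_k$ (not the full norm); this is precisely why the zero-mean normalization in the first step is essential, since it converts the $H^{3/2}(\widehat I_{k,l})$-norm arising from the trace theorem into a pure $|\widehat u_k|_{H^2(\widehat\Omega)}$ seminorm via Poincar\'e.
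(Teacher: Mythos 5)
Your overall strategy -- reduce to $|\Pi_{r,\eta}w|_{H^r(0,1)}$ via Lemma~\ref{lem:mathcalM:0b} and \eqref{eq:geoequiv:bdy}, then pass through smoothness $3/2$ between the $H^1$-stability and the $H^2$-quasi-stability (Lemma~\ref{lem:h2:h2:1d}) of $\Pi_{r,\eta}$ -- is the same idea as the paper's, but your execution has two genuine gaps. The decisive one is the fractional inverse inequality $|v_h|_{H^r(0,1)}^2\lesssim(r^2/\eta)^{2r-3}|v_h|_{H^{3/2}(0,1)}^2$ for $v_h\in S_{r,\eta}(0,1)$: it does not follow by ``interpolating the two integer-order estimates at $s=3/2$''. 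Interpolation applied to the identity on the spline space only gives boundedness from the interpolation space of the couple $\bigl(S_{r,\eta}\cap H^1,\,S_{r,\eta}\cap H^2\bigr)$, whose K-functional is computed \emph{inside} $S_{r,\eta}$; to dominate it by the $H^{3/2}$-norm (K-functional over all of $H^1$, $H^2$) one needs a projector onto $S_{r,\eta}$ that is simultaneously $H^1$- and $H^2$-stable with degree-independent constants -- exactly what is not available, since the best $H^2$-bound for $\Pi_{r,\eta}$ carries the factor $r^2$ of Lemma~\ref{lem:h2:h2:1d}, so an additional $r$-dependent factor enters. You flag this risk yourself but do not remove it. The paper never formulates a fractional inverse estimate: it splits $w=(w-v)+v$ for arbitrary $v\in H^2(0,1)$, applies the $s=1$ inverse estimate of \cite[Corollary~3.94]{Schwab:1998} together with $H^1$-stability to $\Pi_{r,\eta}(w-v)$, and the $s=2$ inverse estimate together with the $r^2$-bound to $\Pi_{r,\eta}v$, arriving at $\psi^{2r-2}\inf_{v}\bigl(|w-v|_{H^1(0,1)}^2+\psi^{-2}r^4|v|_{H^2(0,1)}^2\bigr)$ with $\psi=2\sqrt3\,r^2\eta^{-1}$; this explicit K-functional is then bounded by Lemma~\ref{lem:interpol}, applied to the derivative of $\widehat u_k$ with $\theta=\psi^{-1}r^2$, giving $\psi^{2r-3}r^2|\widehat u_k|_{H^2(\widehat\Omega)}^2$ without fractional Sobolev norms or a trace embedding.

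The second gap is your normalization: subtracting only the mean of $\widehat u_k$ and invoking the Poincar\'e inequality does not give $\|\widehat u_k\|_{H^2(\widehat\Omega)}\lesssim|\widehat u_k|_{H^2(\widehat\Omega)}$ -- take $\widehat u_k(x,y)=x-\tfrac12$, which has zero mean and vanishing $H^2$-seminorm but nonvanishing $H^2$-norm. Since your trace step produces the full $H^2$-norm, you would have to subtract a best \emph{affine} approximation (Deny--Lions/Bramble--Hilbert) and verify that affine functions are annihilated by the left-hand side: $\Upsilon_{k,l}^{-1}$ of an affine function is affine in the edge parameter (for $(k,l)\in\mcN^*$ this uses Assumption~\ref{ass:geoequiv:bdy}), it is reproduced by $\Pi_{r,\eta}$, and its $H^r$-seminorm vanishes for $r\ge2$. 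This is repairable, but as written the step is false; note that the paper's route via Lemma~\ref{lem:interpol} delivers the $H^2$-seminorm directly and needs no normalization at all.
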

\begin{proof}
	We have using~\eqref{eq:geoequiv:bdy}
	and Lemma~\ref{lem:mathcalM:0b}
	\[
		| \widehat{\mc{M}}_k \widehat{u}_k |_{H^r(\widehat{I}_{k,l})}^2
			\eqsim
			| \Upsilon_{k,l}^{-1}  \widehat{\mc{M}}_k \widehat{u}_k |_{H^r(0,1)}^2
			= 
			| \Pi_{r,\eta} \Upsilon_{k,l}^{-1}  \widehat{u}_k |_{H^r(0,1)}^2.
	\]
	Now, a standard inverse estimate (\cite[Corollary~3.94]{Schwab:1998}) 
	yields
	\[
		| \widehat{\mc{M}}_k \widehat{u}_k |_{H^r(\widehat{I}_{k,l})}^2
			\lesssim
			\psi^{2(r-s)} | \Pi_{r,\eta} \Upsilon_{k,l}^{-1}  \widehat{u}_k |_{H^{s}(0,1)}^2 \quad \mbox{for}\quad s\in\{1,2\},
	\]
	where $\psi:= 2\sqrt{3} r^2 \eta^{-1} $.
	Lemma~\ref{lem:h2:h2:1d} and the $H^1$-stability of $\Pi_{r,\eta}$ yield
	\[
			| \Pi_{r,\eta} w|_{H^2(0,1)}^2 \lesssim r^4 |w|_{H^2(0,1)}^2
			\quad \mbox{and} \quad
			| \Pi_{r,\eta} w|_{H^1(0,1)}^2 \le |w|_{H^1(0,1)}^2,
	\]
	so we obtain 
	\begin{align*}
		| \widehat{\mc{M}}_k \widehat{u}_k |_{H^r(\widehat{I}_{k,l})}^2
			\lesssim
			\psi^{2r-2} 
			\quad \inf_{v \in H^2(0,1)}
			 \big( |  \Upsilon_{k,l}^{-1}  \widehat{u}_k - v |_{H^{1}(0,1)}^2+
			\psi^{-2} r^{4}
				|  v |_{H^{2}(0,1)}^2 \big).
	\end{align*}
	Using~\eqref{eq:geoequiv:bdy}, we obtain
	\begin{align*}
		| \widehat{\mc{M}}_k \widehat{u}_k |_{H^r(\widehat{I}_{k,l})}^2
			\lesssim
			\psi^{2r-2}
			\inf_{v \in H^2(\widehat{I}_{k,l})}
			 \big(
				|   \widehat{u}_k - v |_{H^{1}(\widehat{I}_{k,l})}^2+
			\psi^{-2} r^{4}
				|  v |_{H^{2}(\widehat{I}_{k,l})}^2 \big).
	\end{align*}
	By applying Lemma~\ref{lem:interpol} to the derivative of $\widehat{u}_k$,
	we obtain the desired result.
\qed\end{proof}

\begin{lemma}\label{lem:mathcalM:0}
		$
			\| \llbracket (I-\mc{M}) u  \rrbracket \|_{L_2(I_{k,l})} = 0
		$
		holds for all $u\in H^{1,\circ}(\Omega)\cap \mcH^2(\Omega)$
		and $(k,l) \in \mcN$.
\end{lemma}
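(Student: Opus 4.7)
The target estimate $\|\llbracket (I-\mathcal{M})u\rrbracket\|_{L_2(I_{k,l})}=0$ is equivalent to saying that the traces of $(I-\mathcal{M})u$ from $\Omega_k$ and from $\Omega_l$ coincide on $I_{k,l}$. Since $u\in H^{1,\circ}(\Omega)\subset H^1(\Omega)$, the trace of $u$ itself is single-valued on $I_{k,l}$, so $\llbracket u\rrbracket=0$; consequently, the claim reduces to showing $\llbracket \mathcal{M}u\rrbracket=0$, i.e., that the two traces of $\mathcal{M}u$ agree on $I_{k,l}$.

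The plan is to parameterize the physical interface by $t\mapsto G_k(\gamma_{k,l}(t))$, $t\in(0,1)$, and to express both traces of $\mathcal{M}u$ in this common coordinate. Using the definition \eqref{eq:glob:mcM:def}, the trace on the $k$-side becomes $(\widehat{\mathcal{M}}_k\hat{u}_k)(\gamma_{k,l}(t)) = (\Upsilon_{k,l}^{-1}\widehat{\mathcal{M}}_k\hat{u}_k)(t)$, using $(k,l)\in\mathcal{N}$. For the $l$-side, Assumption~\ref{ass:geoequiv:bdy} identifies the same physical point with $G_l(G_l^{-1}\circ G_k\circ \gamma_{k,l}(t))$; applying the definition of $\Upsilon_{l,k}$ for $(l,k)\in\mathcal{N}^*$, the trace equals $(\widehat{\mathcal{M}}_l\hat{u}_l)(G_l^{-1}\circ G_k\circ \gamma_{k,l}(t)) = (\Upsilon_{l,k}^{-1}\widehat{\mathcal{M}}_l\hat{u}_l)(t)$.

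Next, apply Lemma~\ref{lem:mathcalM:0b} to both expressions to commute $\widehat{\mathcal{M}}$ past the pullback: the two traces become $\Pi_{r,\eta}(\Upsilon_{k,l}^{-1}\hat{u}_k)$ and $\Pi_{r,\eta}(\Upsilon_{l,k}^{-1}\hat{u}_l)$, both viewed as functions on $(0,1)$. Unwinding the definitions, $\Upsilon_{k,l}^{-1}\hat{u}_k(t) = u_k(G_k(\gamma_{k,l}(t)))$ and $\Upsilon_{l,k}^{-1}\hat{u}_l(t) = u_l(G_k(\gamma_{k,l}(t)))$. Since $u\in H^1(\Omega)$ has a single-valued trace on $I_{k,l}$, these two functions on $(0,1)$ coincide, and applying $\Pi_{r,\eta}$ yields the same output. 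Hence the two traces of $\mathcal{M}u$ agree, which proves the claim.

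I do not expect a real obstacle: the argument is a careful bookkeeping of parameterizations. The one point requiring care is using the correct definition of $\Upsilon$ for indices in $\mathcal{N}$ versus $\mathcal{N}^*$ (since the latter absorbs the interpatch transition $G_l^{-1}\circ G_k$), and noting that the orientation ambiguity in Assumption~\ref{ass:geoequiv:bdy} is harmless because both sides pull back to the \emph{same} parameter $t$ of the physical interface.
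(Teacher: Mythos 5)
Your proof is correct and is essentially the paper's own argument: both reduce the jump of $(I-\mc{M})u$ to functions on $(0,1)$ via the common parameterization, commute $\widehat{\mc{M}}_k$ past the pullback with Lemma~\ref{lem:mathcalM:0b}, and use that the trace of $u\in H^1(\Omega)$ is single-valued so that $\Pi_{r,\eta}$ is applied to identical (or zero-difference) data on both sides. The only cosmetic difference is that the paper phrases this through the norm identity~\eqref{eq:lem:mathcalM:0} and linearity of $\Pi_{r,\eta}$, while you argue the two traces agree pointwise.
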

\begin{proof}
	Let $u\in H^{1,\circ}(\Omega)\cap \mcH^2(\Omega)$ be arbitrary but fixed. 

	We obtain using the definition of $\Upsilon_{k,l}$ and
	$\Upsilon_{l,k}$ and Lemma~\ref{lem:geoequiv:1} that
	\begin{equation}\label{eq:lem:mathcalM:0}
	\begin{aligned}
		\|\llbracket w \rrbracket \|_{L_2(I_{k,l})}
		& = \| w_k-w_l \|_{L_2(I_{k,l})}
		\eqsim \| \widehat{w}_k-\widehat{w}_l\circ G_l^{-1}\circ G_k \|_{L_2(\widehat{I}_{k,l})} \qquad\\
		&= \| \Upsilon_{k,l}^{-1}(\widehat{w}_k-\widehat{w}_l\circ G_l^{-1}\circ G_k) \|_{L_2(0,1)} \\
		& = \| \Upsilon_{k,l}^{-1}\widehat{w}_k - \Upsilon_{l,k}^{-1} \widehat{w}_l \|_{L_2(0,1)} 
	\end{aligned}
	\end{equation}
	holds,
	where $\widehat{w}_k:=w_k\circ G_k$ and $\widehat{w}_l:=w_l\circ G_l$. Since
	$u\in H^1(\Omega)$, a standard trace theorem yields
	\begin{equation}\label{lem:mathcalM:0:proof:1}
			 \Upsilon_{k,l}^{-1} \widehat{u}_k 
						- \Upsilon_{l,k}^{-1} \widehat{u}_l = 0.
	\end{equation}
	Thus,~\eqref{eq:lem:mathcalM:0} implies 
	$\| \llbracket u  \rrbracket \|_{L_2(I_{k,l})} = 0$.
	By plugging $\mc{M}u$ into~\eqref{eq:lem:mathcalM:0}, we obtain using Lemma~\ref{lem:mathcalM:0b}
	\[
		\|\llbracket \mc{M} u \rrbracket \|_{L_2(I_{k,l})}
			\eqsim \| \Pi_{r,\eta} ( \Upsilon_{k,l}^{-1} \widehat{u}_k 
						- \Upsilon_{l,k}^{-1}\widehat{u}_l )   \|_{L_2(0,1)}.
	\]
	Using~\eqref{lem:mathcalM:0:proof:1} and $\Pi_{r,\eta}0=0$, we obtain
	$\|\llbracket \mc{M} u \rrbracket \|_{L_2(I_{k,l})}=0$ and consequently also
	$\|\llbracket (I-\mc{M}) u \rrbracket \|_{L_2(I_{k,l})}=0$.
\qed\end{proof}

\begin{lemma}\label{lem:mathcalM:3} The estimate
	$
			 \| \widehat{\Pi}_k (I-\widehat{\mc{M}}_k) u \|_{H^{1,\circ}(\widehat{\Omega})}^2
				\lesssim (1+\eta^2 h^{-2})h^2 
						| u |_{H^{2}(\widehat{\Omega})}^2
	$
	holds for all $u\in H^2(\widehat{\Omega})$ and $k=1,\ldots,K$.
\end{lemma}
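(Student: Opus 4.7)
The plan is to exploit the tensor-product structure of both $\widehat{\mc{M}}_k$ and $\widehat{\Pi}_k$. Since $I-\widehat{\mc{M}}_k = \sum_{l\in\mcN(k)} T_l$ with $T_l := \Xi_{k,l}\Upsilon_{k,l}(I-\Pi_{r,\eta})\Upsilon_{k,l}^{-1}$ and $|\mcN(k)|\le 4$, it suffices to bound $\|\widehat{\Pi}_k T_l u\|_{H^{1,\circ}(\widehat{\Omega})}^2$ for a single $l$. Assuming without loss of generality that $\widehat{I}_{k,l} = \{0\}\times(0,1)$, one has the tensor form $(T_l u)(x,y) = \phi(x)\,w_r(y)$ with $w_r := (I-\Pi_{r,\eta})(u(0,\cdot))$ and $\phi$ the hat function from~\eqref{eq:def:phi} satisfying $\|\phi\|_{L_2(0,1)}^2 \le \eta/3$ and $\|\phi'\|_{L_2(0,1)}^2 = \eta^{-1}$. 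Since $r\ge 2$, \eqref{eq:15bb} yields $(w_r,1)_{L_2(0,1)}=0$, hence $(T_l u,1)_{L_2(\widehat{\Omega})}=\|\phi\|_{L_1}(w_r,1)_{L_2}=0$, and \eqref{eq:15a} then forces $(\widehat{\Pi}_k T_l u,1)_{L_2(\widehat{\Omega})}=0$. Consequently $\widehat{\Pi}_k T_l u \in H^{1,\circ}(\widehat{\Omega})$, and its $H^{1,\circ}$-norm reduces to the $H^1$-seminorm.

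Using the $H^1$-stability of the tensor-product projector $\widehat{\Pi}_k = \Pi^x_{p_k,h_k}\Pi^y_{p_k,h_k}$ (which rests on the $H^1_D$-stability and $L_2$-stability of the univariate $\Pi_{p,h}$), we obtain $|\widehat{\Pi}_k T_l u|_{H^1(\widehat{\Omega})}^2 \lesssim |T_l u|_{H^1(\widehat{\Omega})}^2$, and the tensor form of $T_l u$ yields
\[
|T_l u|_{H^1(\widehat{\Omega})}^2 = \|\phi'\|_{L_2}^2\|w_r\|_{L_2}^2 + \|\phi\|_{L_2}^2 |w_r|_{H^1(0,1)}^2 \lesssim \eta^{-1}\|w_r\|_{L_2(0,1)}^2 + \eta |w_r|_{H^1(0,1)}^2.
\]
The remaining task is thus the pair of sharp estimates $|w_r|_{H^1(0,1)}^2 \lesssim \eta|u|_{H^2(\widehat{\Omega})}^2$ and $\|w_r\|_{L_2(0,1)}^2 \lesssim \eta^3 |u|_{H^2(\widehat{\Omega})}^2$.

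For the $H^1$-bound: the $H^1_D$-orthogonality of $\Pi_{r,\eta}$ combined with $w_r(0)=w_r(1)=0$ (from \cite[Lemma~3.1]{Takacs:2018}) gives $(w_r',v_h')_{L_2}=0$ for every $v_h \in S_{r,\eta}$, and since $\{v_h' : v_h \in S_{r,\eta}\} = S_{r-1,\eta}$, this means $w_r' \perp S_{r-1,\eta}$ in $L_2$; Pythagoras then yields $|w_r|_{H^1}^2 = \inf_{\tilde v \in S_{r-1,\eta}}\|w'-\tilde v\|_{L_2}^2$. Replacing $\tilde v$ by a quasi-interpolant into $S_{r-1,\eta}$ of an arbitrary $v \in H^1(\widehat{I}_{k,l})$ gives $|w_r|_{H^1}^2 \lesssim \inf_{v}(\|w'-v\|_{L_2}^2 + \eta^2|v|_{H^1}^2)$, and Lemma~\ref{lem:interpol} applied to $\partial_y u$ with $\theta=\eta$ (noting $w' = (\partial_y u)|_{\widehat{I}_{k,l}}$) bounds this infimum by $\eta|u|_{H^2(\widehat{\Omega})}^2$. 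For the $L_2$-bound, an Aubin--Nitsche duality works: for $g\in L_2(0,1)$, take $\psi \in H^2(0,1)\cap H^1_0(0,1)$ with $-\psi''=g$, integrate by parts using $w_r(0)=w_r(1)=0$ to get $(w_r,g)_{L_2}=(w_r',\psi')_{L_2}=(w_r',\psi'-\tilde\psi)_{L_2}$ for any $\tilde\psi \in S_{r-1,\eta}$, and use the $L_2$-best approximation estimate $\|\psi'-\tilde\psi\|_{L_2}\lesssim \eta\|\psi''\|_{L_2} = \eta\|g\|_{L_2}$ to conclude $\|w_r\|_{L_2}\lesssim \eta|w_r|_{H^1}$. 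Substituting gives $|T_l u|_{H^1}^2 \lesssim \eta^2 |u|_{H^2}^2 \le (1+\eta^2 h^{-2})h^2|u|_{H^2}^2$, and summing over the at most four $l$ yields the claim. The \textbf{main obstacle} is obtaining the sharp factor of $\eta$ in $|w_r|_{H^1}^2 \lesssim \eta|u|_{H^2}^2$: a naive stability argument only gives $|w_r|_{H^1}^2 \lesssim |w|_{H^1}^2 \lesssim |u|_{H^2}^2$, losing the extra $\eta$ that is crucial for the final bound. The sharpening requires combining Lemma~\ref{lem:interpol} (which encodes the trace regularity via a K-functional of scale $\sqrt{\eta}$) with the Galerkin orthogonality $w_r' \perp S_{r-1,\eta}$, and the same orthogonality is the engine of the Aubin--Nitsche step.
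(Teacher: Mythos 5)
Your overall strategy (exploit the tensor structure $T_l u=\phi\otimes w_r$, reduce everything to univariate bounds for $w_r=(I-\Pi_{r,\eta})\Upsilon_{k,l}^{-1}u$, and land on Lemma~\ref{lem:interpol} applied to the derivative with $\theta=\eta$) is the same as the paper's, and your univariate estimates $|w_r|_{H^1(0,1)}^2\lesssim\eta\,|u|_{H^2(\widehat\Omega)}^2$ and $\|w_r\|_{L_2(0,1)}\lesssim\eta\,|w_r|_{H^1(0,1)}$ are sound (as is the nice observation that the mean-value term vanishes for $r\ge 2$). However, there is a genuine gap at the step you dispose of in one line: the claimed $H^1(\widehat\Omega)$-stability $|\widehat\Pi_k v|_{H^1}\lesssim|v|_{H^1}$, which you base on an alleged $L_2$-stability of the univariate $\Pi_{p,h}$ uniform in $p$ and $h$. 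The projector $\Pi_{p,h}$ is the $H^1_D$-orthogonal projection; it is stable in the $H^1$-seminorm in its own direction, but it is not even defined on $L_2$, and no $p$- and $h$-uniform bound $\|\Pi_{p,h}g\|_{L_2}\lesssim\|g\|_{L_2}$ is proved in the paper or its references (one only has $\|\Pi_{p,h}g\|_{L_2}\lesssim\|g\|_{L_2}+h|g|_{H^1}$ via the triangle inequality and the robust approximation estimate). For the tensor-product projector this matters precisely in the cross terms you need: $\|\partial_x(\Pi_{p,h}\phi\otimes\Pi_{p,h}w_r)\|_{L_2}^2=\|(\Pi_{p,h}\phi)'\|_{L_2}^2\,\|\Pi_{p,h}w_r\|_{L_2}^2$ requires $L_2$-control of $\Pi_{p,h}w_r$, and the $\partial_y$ term requires $L_2$-control of $\Pi_{p,h}\phi$; neither follows from $H^1_D$-orthogonality.

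This is not a cosmetic issue: if you replace your stability claim by the available bound $\|\Pi_{p,h}g\|_{L_2}^2\lesssim\|g\|_{L_2}^2+h^2|g|_{H^1}^2$, your own computation turns $\eta^{-1}\|\Pi_{p,h}w_r\|_{L_2}^2$ into $\eta^{-1}(\eta^3+h^2\eta)|u|_{H^2}^2=(\eta^2+h^2)|u|_{H^2}^2$, and similarly for the $\partial_y$ term via $\|\Pi_{p,h}\phi\|_{L_2}^2\lesssim\eta+h^2\eta^{-1}$ — i.e., exactly the paper's bound $(1+\eta^2h^{-2})h^2|u|_{H^2(\widehat\Omega)}^2$, which is what the paper proves by estimating $\Psi_{x,l}$, $\Psi_{y,l}$, $\Psi_{\circ,l}$ separately and inserting the triangle inequality plus the robust approximation estimates before invoking Lemma~\ref{lem:interpol}. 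The fact that your argument delivers the strictly stronger bound $\eta^2|u|_{H^2}^2$, with no $h$-dependence, should itself have been a warning sign: the additive $h^2$ in the lemma is precisely the price of not having uniform $L_2$-stability of $\Pi_{p,h}$. With the stability step repaired along these lines, the rest of your proof goes through and essentially coincides with the paper's.
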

\begin{proof}
	Using the definition of $\widehat{\mc{M}}_k$ and of the $H^{1,\circ}$-norm, we obtain 
	\begin{align}\nonumber
		 \| \widehat{\Pi}_k (I-\widehat{\mc{M}}_k) u \|_{H^{1,\circ}(\widehat{\Omega})}
		 	  &\le \sum_{l\in \mcN(k)} 
		 	\| \widehat{\Pi}_k \Xi_{k,l} \Upsilon_{k,l}
		 				(I-\Pi_{r,\eta}) \Upsilon_{k,l}^{-1}  u \|_{H^{1,\circ}(\widehat{\Omega})} \\
		 	 & \lesssim \sum_{l\in \mcN(k)} (\Psi_{x,l} + \Psi_{y,l} + \Psi_{\circ,l}),\label{eq:molliproof1}
	\end{align}
	where
	$
		 	\Psi_{\square,l}
		 	 :=
		 		\| \tfrac{\partial}{\partial \square} \widehat{\Pi}_k \Xi_{k,l} \Upsilon_{k,l}
		 				(I-\Pi_{r,\eta}) \Upsilon_{k,l}^{-1}  u\|_{L_2(\widehat{\Omega})}
	$
	for $\square\in\{x,y\}$ and
	$
		 	\Psi_{\circ,l}
		 	 :=
		 		(  \widehat{\Pi}_k \Xi_{k,l} \Upsilon_{k,l}
		 				(I-\Pi_{r,\eta}) \Upsilon_{k,l}^{-1}  u,1)_{L_2(\widehat{\Omega})}.
	$
	We estimate the terms $\Psi_{x,l}$, $\Psi_{y,l}$ and $\Psi_{\circ,l}$ separately. 
	Let without loss of generality $\widehat{I}_{k,l}= \{0\}\times (0,1)$. 

	\emph{Step~1.} Using~\eqref{eq:3:8} and the $H^1$-stability of the $H^{1,D}$-orthogonal projection,
	and $w:=\Upsilon_{k,l}
		 				(I-\Pi_{r,\eta})\Upsilon_{k,l}^{-1}  u$,  we obtain
	\begin{align*}
		\Psi_{x,l}^2 &=
		\| \tfrac{\partial}{\partial x} \Pi^x_k\Pi^y_k \Xi_{k,l}  w\|_{L_2(\widehat{\Omega})}^2 
		\le 
		\| \tfrac{\partial}{\partial x} \Pi^y_k \Xi_{k,l} w\|_{L_2(\widehat{\Omega})}^2\\
		&= \int_0^1 \int_0^1 \big(\phi'(x) \; (\Pi_{p_k,Z_{k,2}}w(0,\cdot))(y) \big)^2 \, \dd x\, \dd y \\
		& =| \phi |_{H^1(0,1)}^2 \| \Pi_{p_k,Z_{k,2}}(w(0,\cdot))\|_{L_2(0,1)}^2
		\eqsim \hat{\Psi}_{x,l}^2 :=  \eta^{-1}\| \Pi_{p_k,Z_{k,2}}(w(0,\cdot))\|_{L_2(0,1)}^2, 
	\end{align*}
	where we use $|\phi|_{H^1(0,1)}^2 \eqsim \eta^{-1}$.   
	The triangle inequality yields
	\begin{align*}
		\Psi_{x,l}^2 \le \hat{\Psi}_{x,l}^2  
				\lesssim \eta^{-1}\|w(0,\cdot)\|_{L_2(0,1)}^2
					+\eta^{-1}\| (I-\Pi_{p_k,Z_{k,2}})(w(0,\cdot))\|_{L_2(0,1)}^2.
	\end{align*}
	Lemma~\ref{lem:univariate} yields
	\begin{align*}
		\Psi_{x,l}^2 \le \hat{\Psi}_{x,l}^2  
				\lesssim \eta^{-1}\|w(0,\cdot)\|_{L_2(0,1)}^2
					+\eta^{-1}h^2 | w(0,\cdot)|_{H^1(0,1)}^2.
	\end{align*}
	The definition of $w$ and~\eqref{eq:geoequiv:bdy} yield
	\begin{align*}
		\Psi_{x,l}^2 \le \hat{\Psi}_{x,l}^2
				\lesssim \eta^{-1}\|(I-\Pi_{r,\eta})\Upsilon_{k,l}^{-1}  u\|_{L_2(0,1)}^2
					+\eta^{-1}h^2 | (I-\Pi_{r,\eta})\Upsilon_{k,l}^{-1}  u|_{H^1(0,1)}^2.
	\end{align*}
	Lemma~\ref{lem:univariate} yields
	\begin{align*}
		\Psi_{x,l}^2 \le \hat{\Psi}_{x,l}^2  
				\lesssim (\eta+\eta^{-1}h^2) 
				\Big(
				\inf_{v\in H^2(0,1)}
				 | \Upsilon_{k,l}^{-1} u-v|_{H^1(0,1)}^2 +
				\eta^{2} | v|_{H^2(0,1)}^2 \Big).
	\end{align*}
	The equation~\eqref{eq:geoequiv:bdy} yields further
	\begin{align*}
		\Psi_{x,l}^2 \le \hat{\Psi}_{x,l}^2
				\lesssim (\eta+\eta^{-1}h^2) \Big(
				\inf_{v\in H^2(\widehat{I}_{k,l})}
				 | u-v|_{H^1(\widehat{I}_{k,l})}^2 +
				\eta^{2} | v|_{H^2(\widehat{I}_{k,l})}^2 \Big) .
	\end{align*}
	Now, Lemma~\ref{lem:interpol} applied to the derivative of $u$ yields
	\begin{align}\label{eq:molliproof2}
		\Psi_{x,l}^2 \le \hat{\Psi}_{x,l}^2
				&\lesssim  (\eta+\eta^{-1}h^2) \eta | u|_{H^{2}(\widehat{\Omega})}^2 
				= (1+\eta^{2}h^{-2})h^2 | u|_{H^{2}(\widehat{\Omega})}^2.
	\end{align}
	
	\emph{Step~2.} Using~\eqref{eq:3:8} and the $H^1$-stability of the $H^{1,D}$-orthogonal
	projection and $w:=\Upsilon_{k,l}
		 				(I-\Pi_{r,\eta})\Upsilon_{k,l}^{-1}  u$,  we obtain
	\begin{align*}
		\Psi_{y,l}^2 &=
		\|\tfrac{\partial}{\partial y} \Pi^x_k\Pi^y_k \Xi_{k,l} w \|_{L_2(\widehat{\Omega})}^2 
		 	 \le 
		 	\|\tfrac{\partial}{\partial y} \Pi^x_k \Xi_{k,l} w \|_{L_2(\widehat{\Omega})}^2\\
			&= \int_0^1 \int_0^1 \big((\Pi_{p_k,Z_{k,1}}\phi)(x) \;\tfrac{\partial}{\partial y} w(0,y) \big)^2 \, \dd x\, \dd y 
		 	  \eqsim 
		 	 \| \Pi_{p_k,Z_{k,1}}  \phi \|_{L_2(0,1)}^2
		 	 | w |_{H^1(\widehat{I}_{k,l})}^2.
	\end{align*}
	Using $\|\Pi_{p_k,Z_{k,1}} \phi\|_{L_2(0,1)}^2 \lesssim \| \phi\|_{L_2(0,1)}^2+\|(I-\Pi_{p_k,Z_{k,1}}) \phi\|_{L_2(0,1)}^2
	\lesssim \| \phi\|_{L_2(0,1)}^2+h^2| \phi|_{H^1(0,1)}^2 \eqsim 
	\eta + h^2 \eta^{-1} =
	(\eta^2 h^{-2}+1)h^2\eta^{-1}$ and the definition of $w$, we obtain
	\[
			\Psi_{y,l}^2\lesssim (\eta^2 h^{-2}+1)h^2\eta^{-1} | \Upsilon_{k,l}
		 				(I-\Pi_{r,\eta})\Upsilon_{k,l}^{-1}  u |_{H^1(\widehat{I}_{k,l})}^2.
	\]
	Using~\eqref{eq:geoequiv:bdy}, we obtain further
	\[
			\Psi_{y,l}^2\lesssim (\eta^2 h^{-2}+1)h^2\eta^{-1} | 
		 				(I-\Pi_{r,\eta})\Upsilon_{k,l}^{-1}  u |_{H^1(0,1)}^2.
	\]
	Using the $H^1$-stability of $\Pi_{r,\eta}$ and the
	approximation error estimate~\cite[Theorem~3.1]{Takacs:2018}, we obtain
	\[
			\Psi_{y,l}^2\lesssim (\eta^2 h^{-2}+1)h^2
						\big(
						\inf_{v\in H^2(0,1)}
						\eta^{-1} 
		 				|\Upsilon_{k,l}^{-1}  u-v |_{H^1(0,1)}^2
						+ \eta 
		 				|v |_{H^2(0,1)}^2
		 				\big).
	\]
	Using~\eqref{eq:geoequiv:bdy} and Lemma~\ref{lem:interpol} applied to the derivative
	of $\Upsilon_{k,l}^{-1}  u$, we obtain
	\begin{align}\nonumber
	\Psi_{y,l}^2
			&\lesssim (\eta^2 h^{-2}+1)h^2
						\big(
						\inf_{v\in H^2(\widehat{I}_{k,l})}
						\eta^{-1} 
		 				| u-v |_{H^1(\widehat{I}_{k,l})}^2
						+ \eta 
		 				|v |_{H^2(\widehat{I}_{k,l})}^2
		 				\big) \\
		 	&\lesssim
		 	(\eta^2h^{-2}+1)h^2 | u |_{H^{2}(\widehat{\Omega})}^2.
		 	\label{eq:molliproof3}
	\end{align}

	\emph{Step~3.} Using 
	$w:=\Upsilon_{k,l}	(I-\Pi_{r,\eta})\Upsilon_{k,l}^{-1} u$
	and~\eqref{eq:15bb}, we obtain
	\begin{align*}
		&\Psi_{\circ,l}^2 =
		(  \Pi^x_k\Pi^y_k \Xi_{k,l}  w,1)_{L_2(\widehat{\Omega})}^2 
		 = \int_0^1\int_0^1
			(\Pi_{p_k,Z_{k,1}}\phi)(x) (\Pi_{p_k,Z_{k,2}} w(0,\cdot))(y) 
		\dd x \dd y \\
		& = \int_0^1
			(\Pi_{p_k,Z_{k,1}}\phi)(x) \dd x
			\int_0^1(\Pi_{p_k,Z_{k,2}} w(0,\cdot))(y) 
		 \dd y \\
        &= \int_0^1
			\phi(x) \dd x
			\int_0^1(\Pi_{p_k,Z_{k,2}} w(0,\cdot))(y) 
		 \dd y 
		  \le \frac{\eta}{2}\;
			\|\Pi_{p_k,Z_{k,2}} w(0,\cdot)\|^2_{L_2(0,1)} = \frac{\eta^2}{2} \hat{\Psi}_{x,l}^2.
	\end{align*}
	Using~\eqref{eq:molliproof2}, and using $\eta\le1$, we further obtain
	\begin{align}\label{eq:molliproof4}
		\Psi_{\circ,l}^2 \lesssim 
			(1+\eta^2 h^{-2} )  h^2 |u|_{H^2(\widehat{\Omega})}^2.
	\end{align}
	
	\emph{Concluding step.} The combination of~\eqref{eq:molliproof1},
	\eqref{eq:molliproof2}, \eqref{eq:molliproof3},
	and~\eqref{eq:molliproof4} yields the desired result.
\qed\end{proof}

\subsection{The approximation error estimate}\label{sec:4:3}

The following three lemmas give approximation error estimates~\eqref{eq:thrm:approx}
for the choice $u_h:=\Pi\mc{M}u$ separately for the individual parts of
$\|\cdot\|_{Q_h^+}$.

\begin{lemma}\label{lem:parts:1}
	$| (I-\Pi\mc{M}) u |_{\mcH^1(\Omega)}^2 \le (1+\eta^2 h^{-2}) h^2 |u|_{\mcH^2(\Omega)}^2$ holds for all $u\in H^{1,\circ}(\Omega)\cap \mcH^2(\Omega)$.
\end{lemma}
\begin{proof}
	First note that the Poincar\'e inequality yields
	\begin{equation}\label{eq:poinc}
		\|\cdot\|_{H^1(\widehat{\Omega})}^2
			\eqsim \|\cdot\|_{H^{1,\circ}(\widehat{\Omega})}^2 := |\cdot|_{H^1(\widehat{\Omega})}^2
				+ (\cdot,1)_{L_2(\widehat{\Omega})}^2.
	\end{equation}
	Let $u\in H^{1,\circ}(\Omega)\cap \mcH^2(\Omega)$ be arbitrary but fixed and let $\widehat{u}_k := u \circ G_k$.
	Using Lemma~\ref{lem:geoequiv:1}, the triangle inequality,
	\eqref{eq:poinc} and~\eqref{eq:15a}, we obtain 
	\begin{align*}
		& | (I-\Pi\mc{M}) u |_{\mcH^1(\Omega)}^2
			 \lesssim 
				\sum_{k=1}^K  | (I-\widehat{\Pi}_k) \widehat{u}_k |_{H^1(\widehat\Omega)}^2
				+ \sum_{k=1}^K   \|  \widehat{\Pi}_k(I- \widehat{\mc{M}}_k) \widehat{u}_k \|_{H^{1,\circ}(\widehat\Omega)}^2 .
	\end{align*}
	We further obtain using Lemma~\ref{lem:h1:h2} and Lemma~\ref{lem:mathcalM:3},
	\begin{align*}
	| (I-\Pi\mc{M}) u |_{\mcH^1(\Omega)}^2
			& \lesssim 
				h^2 \sum_{k=1}^K  |  \widehat{u}_k |_{H^2(\widehat\Omega)}^2
				+ \sum_{k=1}^K (1+\eta^2 h^{-2}) h^2   \|\widehat{u}_k \|_{H^2(\widehat\Omega)}^2 	.
	\end{align*}
	Lemma~\ref{lem:geoequiv:1}, \eqref{eq:15a}, \eqref{eq:mcM:c} and the Poincar\'e inequality
	finish the proof.
\qed\end{proof}

\begin{lemma}\label{lem:parts:2}
	$| (I-\Pi\mc{M}) u |_{\mcH^2(\Omega)}^2 \le (1+\eta^2 h^{-2}) p^4 | u |_{\mcH^2(\Omega)}^2$ holds for all $u\in H^{1,\circ}(\Omega)\cap \mcH^2(\Omega)$.
\end{lemma}
\begin{proof}
	Using Lemma~\ref{lem:geoequiv:1},
	the triangle inequality, \eqref{eq:poinc}, \eqref{eq:15a}
	and a standard inverse inequality (\cite[Corollary~3.94]{Schwab:1998}), we
	obtain
	\begin{align*}
		 &| (I-\Pi\mc{M}) u |_{\mcH^2(\Omega)}^2
		 		 \lesssim \sum_{k=1}^K  
		   		\|(I-\widehat{\Pi}_k\widehat{\mc{M}}_k ) \widehat{u}_k \|_{H^2(\widehat{\Omega})}^2
		 		\\
		 		&\lesssim \sum_{k=1}^K 
		   		\|(I-\widehat{\Pi}_k ) \widehat{u}_k \|_{H^2(\widehat{\Omega})}^2
		   		+\sum_{k=1}^K 
		 		\|\widehat{\Pi}_k (I-\widehat{\mc{M}}_k ) \widehat{u}_k \|_{H^2(\widehat{\Omega})}^2
		 		\\
		 		&\lesssim \sum_{k=1}^K 
		   		|(I-\widehat{\Pi}_k ) \widehat{u}_k |_{H^2(\widehat{\Omega})}^2
		   		+p^4h^{-2} \sum_{k=1}^K 
		 		\|\widehat{\Pi}_k (I-\widehat{\mc{M}}_k ) \widehat{u}_k \|_{H^{1,\circ}(\widehat{\Omega})}^2.
	\end{align*}
	By again applying 
    Lemma~\ref{lem:h1:h2} and Lemma~\ref{lem:mathcalM:3}, we obtain
	\begin{align*}
		 | (I-\Pi\mc{M}) u |_{\mcH^2(\Omega)}^2
		 		&\lesssim p^2 \sum_{k=1}^K  
		   		| \widehat{u}_k |_{H^2(\widehat{\Omega})}^2
		   		+p^4 (1+\eta^2 h^{-2}) \sum_{k=1}^K 
		 		\| \widehat{u}_k \|_{H^{2}(\widehat{\Omega})}^2\\
		 		& \lesssim p^4 (1+\eta^2 h^{-2}) \sum_{k=1}^K 
		 		\| \widehat{u}_k \|_{H^{2}(\widehat{\Omega})}^2.
	\end{align*}
	Lemma~\ref{lem:geoequiv:1}, \eqref{eq:15a}, \eqref{eq:mcM:c} and the Poincar\'e inequality
	finish the proof.
\qed\end{proof}

\begin{lemma}\label{lem:parts:3}
	$\sum_{(k,l)\in \mc{I}}  \| \llbracket (I-\Pi\mc{M}) u \rrbracket \|_{L_2(I_{k,l})}^2
			\le \pi^{-2r} h^{2r} (2 \sqrt{3} r^2 \eta^{-1})^{2r-3}r^2 |u|_{\mcH^2(\Omega)}^2$
	holds for all $u\in H^{1,\circ}(\Omega)\cap \mcH^2(\Omega)$.
\end{lemma}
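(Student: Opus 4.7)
The plan is to exploit the continuity of both $u$ and $\mc{M}u$ across the interfaces to reduce the jump error to a patch-local approximation error of $\widehat{\Pi}_k$, applied to the mollified function $\widehat{\mc{M}}_k \widehat{u}_k$, whose interface trace is a smooth coarse polynomial of degree $r$. Since $u\in H^{1,\circ}(\Omega)$ satisfies $\llbracket u\rrbracket=0$ on every interface, and Lemma~\ref{lem:mathcalM:0} gives $\llbracket \mc{M}u\rrbracket=0$, we rewrite
\[
\llbracket (I-\Pi\mc{M})u\rrbracket = \llbracket (I-\Pi)\mc{M}u\rrbracket \quad \mbox{on } I_{k,l}.
\]
Passing to the parameter domain via Lemma~\ref{lem:geoequiv:1} and applying the triangle inequality yields
\[
\| \llbracket (I-\Pi\mc{M})u\rrbracket\|_{L_2(I_{k,l})}^2 \lesssim \|(I-\widehat{\Pi}_k)\widehat{\mc{M}}_k\widehat{u}_k\|_{L_2(\widehat{I}_{k,l})}^2 + \|(I-\widehat{\Pi}_l)\widehat{\mc{M}}_l\widehat{u}_l\|_{L_2(\widehat{I}_{l,k})}^2.
\]
Lemma~\ref{lem:h1:hr} bounds each term by $2^r h^{2r}$ times the $H^r$-seminorm of the interface trace of the mollified function, and Lemma~\ref{lem:mathcalM:2} then bounds that seminorm by $(2\sqrt{3}r^2\eta^{-1})^{2r-3}r^2$ times the patchwise $H^2$-seminorm of $\widehat{u}_k$ (respectively $\widehat{u}_l$).

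The main obstacle is robustness in the coefficient. A naive use of the above would yield a bound proportional to $\alpha_{k,l}(|u|_{H^2(\Omega_k)}^2+|u|_{H^2(\Omega_l)}^2)$, which produces cross terms like $\alpha_l|u|_{H^2(\Omega_k)}^2$ that are not controlled by $|u|_{\mcH^2_\alpha(\Omega)}^2=\sum_k \alpha_k |u|_{H^2(\Omega_k)}^2$ when the coefficient jumps are large. To circumvent this, I would combine Lemma~\ref{lem:mathcalM:0b}, which gives $\Upsilon_{k,l}^{-1}\widehat{\mc{M}}_k\widehat{u}_k=\Pi_{r,\eta}\Upsilon_{k,l}^{-1}\widehat{u}_k$, with the trace compatibility $\Upsilon_{k,l}^{-1}\widehat{u}_k=\Upsilon_{l,k}^{-1}\widehat{u}_l$ (possibly up to the reflection $t\mapsto 1-t$ permitted by Assumption~\ref{ass:geoequiv:bdy}, which leaves all $H^s(0,1)$-seminorms invariant). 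Together with~\eqref{eq:geoequiv:bdy}, these imply the symmetry
\[
|\widehat{\mc{M}}_k\widehat{u}_k|_{H^r(\widehat{I}_{k,l})} \eqsim |\widehat{\mc{M}}_l\widehat{u}_l|_{H^r(\widehat{I}_{l,k})}.
\]
This identity lets us invoke Lemma~\ref{lem:mathcalM:2} on whichever patch $k^*\in\{k,l\}$ realizes $\alpha_{k^*}=\alpha_{k,l}$, so that
\[
\alpha_{k,l}\, |\widehat{\mc{M}}_k\widehat{u}_k|_{H^r(\widehat{I}_{k,l})}^2 \lesssim (2\sqrt{3}r^2\eta^{-1})^{2r-3}r^2\; \alpha_{k^*}\, |u|_{H^2(\Omega_{k^*})}^2,
\]
with no cross-patch coefficient mismatch.

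Finally, I would sum over $(k,l)\in\mcN$. Since each patch $j$ appears as the dominant index $k^*$ for at most $|\mcN(j)|\le 4$ of its interfaces, the patchwise contributions $\alpha_j|u|_{H^2(\Omega_j)}^2$ are counted a bounded number of times, and the sum is absorbed into a multiple of $|u|_{\mcH^2_\alpha(\Omega)}^2$. Collecting all factors gives the claimed bound $2^r h^{2r}(2\sqrt{3}r^2\eta^{-1})^{2r-3}r^2|u|_{\mcH^2_\alpha(\Omega)}^2$ up to a hidden absolute constant. The key non-routine step is the symmetry argument that permits the $\alpha_{k,l}$ weight to be attached to whichever patch has the matching coefficient; the rest is a direct composition of the previously established lemmas.
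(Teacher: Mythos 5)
Your proposal is correct and follows essentially the same route as the paper: reduce to $\llbracket(I-\Pi)\mc{M}u\rrbracket$ via Lemma~\ref{lem:mathcalM:0}, map to the parameter domain, apply Lemma~\ref{lem:h1:hr} and Lemma~\ref{lem:mathcalM:2}, and handle coefficient robustness through the interface-seminorm equivalence $|\widehat{\mc{M}}_k\widehat{u}_k|_{H^r(\widehat{I}_{k,l})}\eqsim|\widehat{\mc{M}}_l\widehat{u}_l|_{H^r(\widehat{I}_{l,k})}$, which lets the weight $\alpha_{k,l}$ be attached to the patch attaining the maximum. Your justification of that equivalence via Lemma~\ref{lem:mathcalM:0b} and the trace compatibility is exactly the reasoning the paper compresses into its appeal to Assumption~\ref{ass:geoequiv:bdy}.
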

\begin{proof}
	Let $u\in H^{1,\circ}(\Omega)\cap \mcH^2(\Omega)$ be arbitrary but fixed and let $\widehat{u}_k := u \circ G_k$.
	Observe that the triangle inequality, Lemma~\ref{lem:mathcalM:0} 
	and Lemma~\ref{lem:geoequiv:1} yield
	\begin{align*}
		&\sum_{(k,l)\in \mcN} \| \llbracket (I-\Pi\mc{M}) u \rrbracket \|_{L_2(I_{k,l})}^2
		   \lesssim 
				 \sum_{(k,l) \in \mcN} \| \llbracket (I-\Pi) \mc{M} u \rrbracket \|_{L_2(I_{k,l})}^2 \\
		& \quad \lesssim 
				 \sum_{(k,l)\in \mcN\cup \mcN^*} \|  ((I-\Pi) \mc{M} u)|_{\Omega_k}  \|_{L_2(I_{k,l})}^2 \\
		& \quad \lesssim 
				 \sum_{(k,l)\in \mcN\cup \mcN^*} \|  (I-\widehat{\Pi}_k) \widehat{\mc{M}}_k \widehat{u}_k  \|_{L_2(\widehat{I}_{k,l})}^2.
	\end{align*}
	Lemma~\ref{lem:h1:hr} yields 
	\begin{align*}
		\sum_{(k,l)\in \mc{I}}\| \llbracket (I-\Pi\mc{M}) u \rrbracket \|_{L_2(I_{k,l})}^2
		  \lesssim \pi^{-2r} h^{2r}
				 \sum_{(k,l)\in \mcN\cup \mcN^*} 
				 		|   \widehat{\mc{M}}_k \widehat{u}_k  |_{H^r(\widehat{I}_{k,l})}^2.
	\end{align*}
	Since Assumption~\ref{ass:geoequiv:bdy} yields
	$|   \widehat{\mc{M}}_k \widehat{u}_k  |_{H^r(\widehat{I}_{k,l})}^2 \eqsim
	|   \widehat{\mc{M}}_l \widehat{u}_l  |_{H^r(\widehat{I}_{l,k})}^2$, we obtain
	\[
				 		|   \widehat{\mc{M}}_k \widehat{u}_k  |_{H^r(\widehat{I}_{k,l})}^2
			+ 
				 		|   \widehat{\mc{M}}_l \widehat{u}_l  |_{H^r(\widehat{I}_{l,k})}^2
			\lesssim 
				 		|   \widehat{\mc{M}}_k \widehat{u}_k  |_{H^r(\widehat{I}_{k,l})}^2
			+ 
				 		|   \widehat{\mc{M}}_l \widehat{u}_l  |_{H^r(\widehat{I}_{l,k})}^2
	\]
	and therefore also
	\begin{align*}
		\sum_{(k,l)\in \mc{I}} \| \llbracket (I-\Pi\mc{M}) u \rrbracket \|_{L_2(I_{k,l})}^2
		  \lesssim \pi^{-2r} h^{2r}
				 \sum_{(k,l)\in \mcN\cup \mcN^*} 
				 		|   \widehat{\mc{M}}_k \widehat{u}_k  |_{H^r(\widehat{I}_{k,l})}^2.
	\end{align*}	
	Now, Lemma~\ref{lem:mathcalM:2} yields
	\begin{align*}
		\sum_{(k,l)\in \mc{I}}  \| \llbracket (I-\Pi\mc{M}) u \rrbracket \|_{L_2(I_{k,l})}^2
		  \lesssim \pi^{-2r} h^{2r} (2 \sqrt{3} r^2 \eta^{-1})^{2r-3}r^2
				 \sum_{k=1}^K 
				 		| \widehat{u}_k |_{H^2(\widehat{\Omega})}^2.
	\end{align*}
	Lemma~\ref{lem:geoequiv:1} finishes the proof.
\qed\end{proof}

Finally, we can show Theorems~\ref{thrm:high} and~\ref{thrm:approx}. 
\begin{proof}[of Theorem~\ref{thrm:approx}]
	Let $u\in \mcH^{2,\circ}(\Omega) \cap H^1(\Omega)$ be arbitrary but
	fixed and define $u_h:=\Pi\mc{M}u$.
	
	First, we show that
	\begin{equation}\label{eq:thrm:approx:1}
		\| u-u_h \|_{Q_h^+}^2 \lesssim \underbrace{ \big(1 + \eta_0^2 h^{-2} 
					+ \sigma (3 r^2 h\eta_0^{-1})^{2r-3} r^2 \big)}_{\displaystyle \Psi:=  } h^2
						|u|_{\mcH^2(\Omega)}^2
	\end{equation}
	holds for any $r \in \{2,\ldots, p_{\min}\}$ and all $\eta_0>0$.
	
	\emph{Case 1.} Assume $\eta_0 \le 1$. In this, case we
	define $\eta := \lceil \eta_0^{-1} \rceil^{-1}$ and observe
	\[
					\frac12 \eta_0 \le \eta \le \eta_0.
	\]
	\eqref{eq:def:qhplus} and Lemmas~\ref{lem:parts:1}, \ref{lem:parts:2}, and \ref{lem:parts:3}
	and $\sigma \gtrsim p^2$ yield
	\begin{align*}
		& \| u-u_h \|_{Q_h^+}^2 
			 \lesssim 
				\left(1+\eta^2 h^{-2}+\frac{\sigma}{h^3} \pi^{-2r} h^{2r}(2\sqrt{3} r^2 \eta^{-1})^{2r-3}r^2
					\right) h^2   |u|_{\mcH^2(\Omega)}^2
	\end{align*}
	and since $4\sqrt 3 \pi^{-1}\le 3$ further~\eqref{eq:thrm:approx:1}.
	
	\emph{Case 2.} Assume $\eta_0 > 1$. Define
	\[
		W:=\{
				u \in H^1(\Omega) \;:\;
					u\circ G_k \in S_{1,1}(\widehat{\Omega})
					\mbox{ for all } k = 1,\ldots,K
		\},
	\]
	i.e., the set of all globally continuous functions which are locally just linear.
	Observe that $W\subseteq V_h$. Using $u$ and $w$ being continuous, we obtain
	\begin{align*}
		\|u-w\|_{Q_h^+}^2 = |u-w|_{\mc H^1(\Omega)}^2 + \frac{h^2}{\sigma^2 } |u-w|_{\mc H^2(\Omega)}^2.
	\end{align*}
	For the choice
	\[
				w\in H^1(\Omega) \quad \mbox{with}\quad w|_{\Omega_k} := w_k = \widehat{w}_k\circ G_k^{-1},
	\]
	where
	\[
			\widehat{w}_k(x,y) :=
				\sum_{i=0}^1 \sum_{j=0}^1
				\widehat{\phi}_i(x)\phi_j(y) \widehat{u}_k(i,j) \quad \mbox{where}\quad
				\widehat{\phi}_0(t):=1-t \quad \mbox{and}\quad \widehat{\phi}_1(t):=t,
	\]
	we further obtain using
	standard approximation error estimates and
	Lemma~\ref{lem:geoequiv:1}
	\begin{align*}
		\inf_{v_h \in V_h} \|u-v_h\|_{Q_h^+}^2 
		\le \|u-w+c\|_{Q_h^+}^2  =
		\|u-w\|_{Q_h^+}^2
			&\lesssim (1+h^2 \sigma^{-2} )|u|_{\mc H^2(\Omega)}^2,
	\end{align*}
	where $c:=(w,1)_{L_2(\Omega)}/(1,1)_{L_2(\Omega)}$.
	Using $\sigma^{-2} \le p^{-4}\le \tfrac{1}{16}$ and $h\lesssim1$ and
	$\eta_0 > 1$, we have
	\begin{align*}
		\inf_{v_h \in V_h} \|u-v_h\|_{Q_h^+}^2 &\lesssim 
		\left(\eta_0^2+\frac{h^2}{16}\right)
		|u|_{\mcH^2(\Omega)}^2 \le (1+\eta_0^2h^{-2})
		h^2 |u|_{\mcH^2(\Omega)}^2
		,
	\end{align*}
	which shows~\eqref{eq:thrm:approx:1} also for the second case.
	
	Finally, we show that $\Psi$ is such that the desired
	bound~\eqref{eq:thrm:approx} follows. We again consider two cases.
	
	\emph{Case 1.} Assume $\ln(\sigma^{1/2})\le p_{\min}$. In this case, we choose 
	\[
					r := \max \{2, \lceil \ln(\sigma^{1/2}) \rceil \}
	\quad
	\mbox{and}
	\quad
					\eta_0 := 3 \ee r^2 h,
	\]
	where $\ee$ is Euler's number ($\ln\ee=1$), and obtain
	\[
		\Psi   \lesssim  
		 r^4  + \sigma  \ee^{-2r} \eqsim r^4
			\lesssim 
				 \big(\ln \sigma\big)^4
			\lesssim \big(\ln \sigma\big)^4 \sigma^{2/(2p_{\min}-1)},
	\]
	which finishes the proof for Case 1.
	
	\emph{Case 2.} Assume $\ln(\sigma^{1/2})\ge p_{\min}$. In this case, we choose 
	\[
					r := p_{\min}
	\quad
	\mbox{and}
	\quad
					\eta_0 := 3 \sigma^{1/(2r-1)} r^2 h
	\]
	and obtain immediately
	\[
		\Psi = 1 + 3^2 \sigma^{2/(2r-1)} r^4
					+ \sigma^{2/(2r-1)}   r^2 
					 \lesssim \sigma^{2/(2p_{\min}-1)} (\ln\sigma)^4,
	\]
	which finishes the proof for Case 2.
\qed\end{proof}
\begin{proof}[of Theorem~\ref{thrm:high}]
	Let $u\in \mcH^{2,\circ}(\Omega) \cap H^1(\Omega)$ be arbitrary but
	fixed and define $u_h:=\Pi u$.
	The defintion of the $Q_h^+$-norm and the $Q_h$-norm yield
	\[
			\| u-v_h \|_{Q_h^+}^2
			=
			\| (I-\Pi)u \|_{\mathcal H^1}^2
			+
			\frac{\sigma}{h} \sum_{(k,l)\in \mathcal N}  
			\| \llbracket (I-\Pi)u \rrbracket \|_{L_2(I_{k,l})}^2
			+
			\frac{h^2}{\sigma^2}		| (I-\Pi)u |_{\mathcal H^2}^2.
	\]
	Using the triangle inequality and 
	Lemma~\ref{lem:geoequiv:1}, we have further
	\[
			\| u-v_h \|_{Q_h^+}^2
			=
			\sum_{k=1}^K\left(
			\| \widehat w_k \|_{H^1(\widehat \Omega)}^2
			+
			\frac{\sigma}{h} \sum_{(k,l)\in \mathcal N}  
			\| \widehat w_k \|_{L_2(\partial \widehat\Omega)}^2
			+
			\frac{h^2}{\sigma^2}	
			| \widehat w_k |_{H^2(\widehat \Omega)}^2\right).
	\]
	where $\widehat w_k:=(I-\widehat \Pi_k)\widehat u_k$. Using a
	trace theorem~\cite[Lemma~4.4]{Takacs:2018} and
	$AB \le \gamma A^2 + \gamma^{-1} B^2$, we obtain
	\begin{align*}
			\| \widehat w_k \|_{L_2(\partial \widehat\Omega)}^2 
			\lesssim
			 	\| \widehat w_k \|_{L_2(\widehat\Omega)}
			 	\| \widehat w_k \|_{H^1(\widehat\Omega)}
			\le
			 	h^{-1} \| \widehat w_k \|_{L_2(\widehat\Omega)}^2
			 	h \| \widehat w_k \|_{H^1(\widehat\Omega)}^2.
	\end{align*}
	Using this, $h\lesssim1$, and $2\le p^2\lesssim \sigma$, we obtain
	\[
			\| u-v_h \|_{Q_h^+}^2
			\lesssim
			\sum_{k=1}^K\left(
			\frac{\sigma}{h^2}
			\| \widehat w_k \|_{L_2(\widehat \Omega)}^2
			+
			\sigma			
			| \widehat w_k |_{H^1(\widehat \Omega)}^2
			+
			\frac{h^2}{p^2}	
			| \widehat w_k |_{H^2(\widehat \Omega)}^2\right).
	\]
	The desired result is then a consequence
	of Lemma~\ref{lem:h1:h2} and the assumed equivalence of the
	norms on the physical domain and the parameter domain. 
\qed\end{proof}

\section{Numerical Experiments}\label{sec:num}

We depict the results of this paper with numerical results. We choose
a spline approximation of the quarter annulus $\Omega:=\{(x,y)\,:\,
x,y>0,\; 1<x^2+y^2<4\}$. This domain is uniformly split into
$4\times 4$ patches in the obvious way. We solve the Poisson equation
\[
		- \Delta u = 2\,\pi^2 \sin(x\;\pi) \; \sin(y\;\pi) \mbox{ on } \Omega \quad \mbox{and}\quad
		u = u^* \mbox{ on } \partial \Omega,
\]
where
$
		u^*(x,y) = \sin(x\;\pi) \; \sin(y\;\pi)
$
is the exact solution. On each patch, we introduce a coarse discretization
space $V_{h_0}$ for $\ell=0$, which
only consists of global polynomials of degree $p$. We then refine all grids
$\ell=1,2,\ldots$ times uniformly. Next, we modify the discretization
spaces in order to obtain non-matching discretizations at the interfaces
(since fully matching distretizations would be a special case that
would allow a conforming discretization that would not be
of interest in a discontinuous Galerkin setting). The modification
means that we refine the grid one additional time for one third
of the patches and that we increase the spline degree to $p+1$
(and the smoothnes to $C^p$) for another third of the patches. 
In Table~\ref{tab:1} and Figure~\ref{fig:1}, we depict the discretization errors
$e_{\ell,p}$ in the $H^1$-norm relative to that of the solution
and the rates $r_{\ell,p}$, given by
\[
	e_{\ell,p}:=\frac{|u_{h_\ell,p}-u^*|_{\mathcal H^1}}{|u^*|_{\mathcal H^1}}
	\qquad\mbox{and}\qquad
	r_{\ell,p}:=\frac{|u_{h_{\ell-1},p}-u^*|_{\mathcal H^1}}{|u_{h_\ell,p}-u^*|_{\mathcal H^1}}.
\]
 
\newcommand{\tabh}[1]{\multicolumn{2}{c|}{$p=#1$}}
\newcommand{\tabhy}[1]{\multicolumn{3}{c|}{$p=#1$}}
\newcommand{\tabhx}[1]{\multicolumn{3}{c}{$p=#1$}}
\newcommand{\tabe}[1]{#1\hspace{-.6em}}

\begin{table}[ht]
\begin{center}
  \begin{tabular}{l|rlr|lr|lr|rlr}
  \toprule
      &\tabhy{2}&\tabh{3}&\tabh{4}&\tabhx{5}             \\
    $\ell$ & \tabe{dofs} & $e_{\ell,p}$ & $r_{\ell,p}$ & $e_{\ell,p}$ & $r_{\ell,p}$ & $e_{\ell,p}$ & $r_{\ell,p}$ & \tabe{dofs} & $e_{\ell,p}$ & $r_{\ell,p}$  \\ 
    \midrule
   2 & \tabe{   852} & \tabe{ 0.7299} &      & \tabe{ 0.6559} &      & \tabe{ 0.5790} &      & \tabe{  1668} & \tabe{ 0.5228} &       \\
   3 & \tabe{  2756} & \tabe{ 0.3542} &  2.1 & \tabe{ 0.3574} &  1.8 & \tabe{ 0.2827} &  1.4 & \tabe{  4100} & \tabe{ 0.2756} &  1.9  \\
   4 & \tabe{  9828} & \tabe{ 0.0709} &  5.0 & \tabe{ 0.0259} & 13.8 & \tabe{ 0.0102} &  2.0 & \tabe{ 12228} & \tabe{ 0.0042} & 66.2  \\
   5 & \tabe{ 37028} & \tabe{ 0.0141} &  5.0 & \tabe{ 0.0020} & 12.8 & \tabe{3.2e--4} & 33.0 & \tabe{ 41540} & \tabe{5.3e--5} & 78.1  \\
   6 & \tabe{143652} & \tabe{ 0.0033} &  4.3 & \tabe{2.2e--4} &  9.2 & \tabe{1.6e--5} & 20.1 & \tabe{152388} & \tabe{1.2e--6} & 44.4  \\
   7 & \tabe{565796} & \tabe{8.1e--4} &  4.1 & \tabe{2.7e--5} &  8.3 & \tabe{9.4e--7} & 17.1 & \tabe{582980} & \tabe{3.6e--7} & 33.9  \\
   \midrule
   \multicolumn{3}{l}{Expected:} & 4 && 8 && 16 &&& 32  \\
  \bottomrule
  \end{tabular}
\end{center}
\caption{Discretization errors}
\label{tab:1}
\end{table}

The numerical experiments show that the error decreases
like $h_\ell\eqsim 2^{-\ell}$
or even better. Whether or not the error bound depends on
$p^2$ or $\ln p$ cannot be seen in this experiment. The
almost $p$-robust convergence of multigrid solvers whose analysis
follows from the presented results, can be seen in~\cite{Takacs:2019a}.
Since we choose splines of maximum smoothness, the number
of degrees of freedom only grows mildly.

\begin{figure}[th]\centering
\begin{tikzpicture}[scale=.6, transform shape]
	\begin{semilogyaxis}[
	minor x tick num=0,
	minor y tick num=9,
	xmin=2, xmax=7,
	ymin=0.00000001, ymax=1.,
	y=.4cm,
	x=1.5cm,
	grid=major,
	legend style={
	at={(0.15,0.05)},
	anchor=south
	}
	]

	\addplot+ [blue,mark=*] coordinates {
	(2,    0.72986585)
	(3,    0.35420757)
	(4,    0.07093129)
	(5,    0.01410018)
	(6,    0.00330666)
	(7,    0.00081292)
	};
	\addplot plot [domain=4:7,dashed,blue,mark=none,forget plot] {0.07093129*4^(-x+4)};

	\addplot [red,mark=diamond] coordinates {
	(2,    0.65591065)
	(3,    0.35739814)
	(4,    0.02589521)
	(5,    0.00202052)
	(6,    0.00021976)
	(7,    0.00002650)
	};
	\addplot plot [domain=4:7,red,dashed,mark=none,forget plot]{0.02589521*8^(-x+4)};

	\addplot [teal,mark=triangle] coordinates {
	(2,    0.57898978)
	(3,    0.28269134)
	(4,    0.01028543)
	(5,    0.00032156)
	(6,    0.00001597)
	(7,    0.00000094)
	};
	\addplot plot [domain=4:7,teal,dashed,mark=none,forget plot]{0.01028543*16^(-x+4)};

	\addplot [brown,mark=square] coordinates {
	(2,    0.52281262)
	(3,    0.27560317)
	(4,    0.00416378)
	(5,    0.00005334)
	(6,    0.00000120)
	(7,    0.000000036)
	};
	\addplot plot [domain=4:7,brown,dashed,mark=none,forget plot]{0.00416378*32^(-x+4)};

	\legend{$p=2$\\
	$p=3$\\
	$p=4$\\
	$p=5$\\
	}
	\end{semilogyaxis}\end{tikzpicture}
	\caption{Discretization errors}\label{fig:1}
\end{figure}

\textbf{Acknowledgments.} The author was supported by the Austrian Science Fund (FWF):
S117 and P31048 and by the bilateral project DNTS-Austria 01/3/2017
(WTZ BG 03/2017), funded by Bulgarian National Science Fund and OeAD (Austria).

\bibliographystyle{amsplain}
\providecommand{\bysame}{\leavevmode\hbox to3em{\hrulefill}\thinspace}
\providecommand{\MR}{\relax\ifhmode\unskip\space\fi MR }
\providecommand{\MRhref}[2]{%
  \href{http://www.ams.org/mathscinet-getitem?mr=#1}{#2}
}
\providecommand{\href}[2]{#2}

\end{document}